\title{Polynomial conditions and homology of \texorpdfstring{$\FI$}{Lg}-modules}
\author{Cihan Bahran}
\affil{Department of Mathematics, Bo\u{g}azi\c{c}i University\\
Bebek, 34342 Istanbul, Turkey}\date{}
\date{}
\newcommand{\FI}{\mathbf{FI}}
\DeclareMathOperator{\sr}{\mathbf{st-rank}}
\DeclareMathOperator{\crit}{crit}
\DeclareMathOperator{\FB}{\mathbf{FB}}
\newcounter{dummy}
\newcommand\sitem[1][]{\item[(#1)]\refstepcounter{dummy}\def\@currentlabel{#1}}
\newcommand{\cofi}[1]{\co_{#1}^{\FI}}
\DeclareMathOperator{\ab}{\mathcal{A}}
\DeclareMathOperator{\reg}{reg}
\DeclareMathOperator{\induce}{\Ind_{\FB}^{\FI}}
\newcommand{\fU}{\mathfrak{U}}
\newcommand{\tgen}{t_{0}}
\newcommand{\trel}{t_{1}}
\newcommand{\shift}[2]{\mathbf{\Sigma}^{#2}   #1 }
\newcommand{\weak}{\delta}
\newcommand{\gaga}{\mathcal{G}}
\newcommand{\deriv}{\mathbf{\Delta}}
\newcommand{\kert}{\mathbf{K}}
\newcommand{\bul}{\bullet}
\newcommand{\hell}{h}
\newcommand{\local}{\hell^{\text{max}}}
\DeclareMathOperator{\poly}{\mathbf{Poly}}
\newcommand{\locoh}[1]{\co_{\mathfrak{m}}^{#1}}
   \def\MR#1{}
\def\blfootnote{\gdef\@thefnmark{}\@footnotetext}
\begin{document}
\maketitle

\blfootnote{\textup{2010} \textit{Mathematics Subject Classification}.
Primary 18A25, 20J06; Secondary 11F75.} 
\blfootnote{\textit{Key words and phrases}. $\FI$-modules, homological stability, representation stability, congruence subgroups.}
\blfootnote{The author was supported in part by T\"{U}B\.{I}TAK 119F422.}
\vspace{-0.8 in}
\begin{onecolabstract} 
We identify two recursively defined polynomial conditions for $\FI$-modules in the literature. We characterize these conditions using homological invariants of $\FI$-modules (namely the local degree and regularity, together with the stable degree) and clarify their relationship. For one of these conditions, we give improved twisted homological stability ranges for the symmetric groups. As another application, we improve the representation stability ranges for congruence subgroups with respect to the action of an appropriate linear group by a factor of 2 in its slope.
\end{onecolabstract}
\vspace{0.4 in}

{\tableofcontents}

\section{Introduction}
There are (at least) two classes of papers that deal in some depth with $\FI$-modules: 
\begin{birki}
 \item There are papers such as \cite{cefn}, \cite{ce-homology}, \cite{li-ramos}, \cite{nss-regularity}, \cite{cmnr-range} where the $\FI$-module is the central object of study. They attach \textbf{homological invariants} to an $\FI$-module by means such as $\FI$-homology or local cohomology, and study the relationship of these invariants both with the stabilization behavior of the $\FI$-module and/or between each other.
 \item There are papers such as \cite{rw-wahl-stab}, \cite{patzt-central}, \cite{mpw-torelli-H2}, \cite{mpp-secondary}, \cite{putman-poly}, which might be thought of as stability machines. The sequence $\{\sym{n}\}$ of symmetric groups is but one of many sequences of groups they deal with, and $\FI$-modules arise as the suitable notion of \textbf{coefficient systems} for $\{\sym{n}\}$. They declare a coefficient system to be \textbf{polynomial} with certain parameters in a \textbf{recursive} fashion: there is a base case, and above that, being polynomial demands a related coefficient system to be polynomial with some of the parameters lowered.
\end{birki}
The main objective of this paper is to characterize the polynomial conditions in (2) for $\FI$-modules by the homological invariants in (1). 

\paragraph{Notation.} We write $\FI$ for the category of finite sets and injections. An $\FI$-module is a functor $V \colon \FI \rarr \lMod{\zz}$ and given a finite set $S$, we write $V_{S}$ for its evaluation; given an injection of finite sets $\alpha\colon S \emb T$, we write $V_{\alpha} \colon V_{S} \rarr V_{T}$ for its induced map. For $n \in \nn$ we set $V_{n} := V_{\{1,\dots,n\}}$. We write $\lMod{\FI}$ for the category of $\FI$-modules. Throughout, our notation for $\FI$-modules will be consistent with \cite{cmnr-range} and \cite{bahran-reg}.

\paragraph{Degree and torsion.} Given an $\FI$-module $W$, we write 
\begin{align*}
 \deg(W) &:= \min\{d \geq -1 : W_{S} = 0 \text{ for } |S| > d\} 
 \\
 &\in \{-1,0,1,2,3,\dots\} \cup \{\infty\} \, .
\end{align*}
An $\FI$-module $V$ is \textbf{torsion} if for every finite set $S$ and $x \in V_{S}$, there exists an injection $\alpha \colon S \emb T$ such that $V_{\alpha}(x) = 0 \in V_{T}$. We write
\begin{align*}
 \locoh{0} \colon \lMod{\FI} \rarr \lMod{\FI}
\end{align*}
for the functor which assigns an $\FI$-module its largest torsion $\FI$-submodule, and write 
\begin{align*}
 h^{0}(V) := \deg(\locoh{0}(V)) \, .
\end{align*}

\paragraph{Shift and derivative functors.} Given any $\FI$-module $V$, we write $\shift{V}{}$ for the composition 
\begin{align*}
 \FI \xrightarrow{- \sqcup \{*\}} \FI \xrightarrow{V} \lMod{\zz} \, .
\end{align*}
and call it the \textbf{shift functor}. It receives a natural transformation from the identity functor $\id_{\lMod{\FI}}$, whose cokernel
\begin{align*}
 \deriv := \coker \left( \id_{\lMod{\FI}} \rarr \shift{}{} \right)
\end{align*}
we call the \textbf{derivative functor}.

\paragraph{Stable degree.} For an $\FI$-module $V$, we set
\begin{align*}
 \weak(V) &:= \min\{r \geq -1 : \deriv^{r+1}(V) \text{ is torsion}\}
 \\
 &\in \{-1,0,1,\dots\} \cup \{\infty\} \, ,
\end{align*}
and call it the \textbf{stable degree} of $V$. In both polynomial conditions for $\FI$-modules we shall consider, the stable degree will be in analogy with the usual degree of a polynomial. Also see \cite[Proposition 2.14]{cmnr-range}.

\subsection{First polynomial condition and local degree}

Suppose $f$ is a function in $n \in \nn$ which is equal to a polynomial of degree $ \leq r$ in the range $n \geq L$. We can consider its \textbf{discrete derivative} $\deriv\! f$, which is the function
\begin{align*}
 \deriv\! f(n) := f(n+1) - f(n) \, .
\end{align*}
Note that $\deriv\! f$ is equal to a polynomial of degree $\leq r-1$ in the same range $n \geq L$. The first polynomial condition we treat for $\FI$-modules is a categorification of this recursion. See \cite[Section 4.4 and Remark 4.19]{rw-wahl-stab} for references to similar definitions in the literature.

\begin{defn} \label{defn:poly1}
 For every pair of integers $r \geq -1$, $L \geq 0$, we define a class of $\FI$-modules $\poly_{1}(r,L)$ recursively via 
\begin{align*}
\poly_{1}(r,L) &:= 
\begin{cases}
\left\{ V \in \lMod{\FI} : \deg(V) \leq L - 1 \right\} & \text{\!\!if $r = -1$,}
\vspace{0.4cm}
\\
 \left\{ V \in \lMod{\FI} : 
\begin{array}{l}
 h^{0}(V) \leq L-1 \text{, and}
 \\
 \deriv V \in \poly_{1}\!\big( r-1,L \big)\!\!
\end{array}
\right\} & \text{\!\!if $r \geq 0$.}
\end{cases}
\end{align*}
\end{defn}
\begin{rem} \label{compare-1}
 Let $V$ be an $\FI$-module and $r \geq -1$, $L \geq 0$ be integers. The following can be seen to be equivalent by inspection: 
\begin{itemize}
 \item $V \in \poly_{1}(r,L)$.
 \item In the sense of \cite[Definition 4.10]{rw-wahl-v3}\footnote{Note that \cite{rw-wahl-v3} is an early preprint version of the published \cite{rw-wahl-stab}. The authors switched from Definition \ref{defn:poly1} to Definition \ref{defn:poly2} in between.}, $V$ has degree $r$ at $L$.
 \item In the sense of \cite[Definition 3.24]{mpw-torelli-H2} and \cite[Definition 7.1]{patzt-central}, $V$ has polynomial degree $\leq r$ in ranks $> L-1$.
\end{itemize}
\end{rem}

\paragraph{Local cohomology and local degree.}
 The functor $\locoh{0}$ defined in the beginning above is left exact. For each $j \geq 0$, we write $\locoh{j} := \operatorname{R}^{j}\!\locoh{0}$ for the $j$-th right derived functor of $\locoh{0}$, and write 
\begin{align*}
 h^{j}(V) &:= \deg(\locoh{j}(V)) 
 \\
 &\in \{-1,0,1,\dots\} \cup \{\infty\} \, ,
 \\
 \local(V) &:= \max\{h^{j}(V) : j \geq 0\}
 \\
 &\in \{-1,0,1,\dots\} \cup \{\infty\} \, ,
\end{align*}
for every $\FI$-module $V$. We call $\local(V)$ the \textbf{local degree} of $V$.

Our first main result is that the stable degree $\weak(V)$ and the local degree $\local(V)$ together characterize the first polynomial condition.
\begin{thmx}\label{thm:hmax}
 For every pair of integers $r \geq -1$, $L \geq 0$, we have 
\begin{align*}
 \poly_{1}(r,L) = \big\{
 V \in \lMod{\FI} : 
 \weak(V) \leq r \text{ and } \local(V) \leq L-1
 \big\} \, .
\end{align*}
\end{thmx}

\subsection{Second polynomial condition and regularity}
The second polynomial condition we shall treat is, perhaps deceivingly, very similar to the first one. In fact the confusion between the two and the resulting need to clarify was what prompted this paper. 
\begin{defn} \label{defn:poly2}
 For every pair of integers $r \geq -1$, $M \geq 0$, we define a class of $\FI$-modules $\poly_{2}(r,M)$ recursively via 
\begin{align*}
\poly_{2}(r,M) &:= 
\begin{cases}
\left\{ V \in \lMod{\FI} : \deg(V) \leq M - 1 \right\} & \text{\!\!if $r = -1$,}
\vspace{0.4cm}
\\
 \left\{ V \in \lMod{\FI} : 
\begin{array}{l}
 h^{0}(V) \leq M-1 \text{, and}
 \\
 \deriv V \in \poly_{2}\!\big( r-1,\max\{0,M-1\} \big)\!\!
\end{array}
\right\} & \text{\!\!if $r \geq 0$.}
\end{cases}
\end{align*}
\end{defn}

\begin{rem} \label{compare-2}
 Let $V$ be an $\FI$-module and $r \geq -1$, $M \geq 0$ be integers. The following can be seen to be equivalent by inspection: 
\begin{itemize}
 \item $V \in \poly_{2}(r,M)$.
 \item In the sense of \cite[Definition 4.10]{rw-wahl-stab}, $V$ has degree $r$ at $M$.
\item In the sense of \cite[Definition 2.40]{mpp-secondary}\footnote{Although the \emph{terminology} used for the polynomial conditions in \cite{mpw-torelli-H2}, \cite{patzt-central} and \cite{mpp-secondary} are the same, the first two use Definition \ref{defn:poly1} while the latter uses Definition \ref{defn:poly2}.}, $V$ has polynomial degree $\leq r$ in ranks $> M-1$. 
 \item In the sense of \cite[Definition 1.6]{putman-poly}, $V$ is polynomial of degree $r$ starting at $M$.
\end{itemize}
\end{rem}

\paragraph{$\FI$-homology and regularity.} \label{defn:reg}
Consider the functor $\cofi{0} : \lMod{\FI} \rarr \lMod{\FI}$ defined via 
\begin{align*}
 \cofi{0}(V)_{S} := \coker\! \left(
 \bigoplus_{T \subsetneq S} V_{T} \rarr V_{S}
 \right)
\end{align*}
for every finite set $S$, which is right exact. For each $i \geq 0$, we write $\cofi{i} := \operatorname{L}_{i}\!\cofi{0}$ for its $i$-th left derived functor, and write 
\begin{align*}
 t_{i}(V) &:= \deg\!\left( \cofi{i}(V) \right) 
 \\
 &\in \{-1,0,1,\dots\} \cup \{\infty\} \, ,
 \\
 \reg(V) &:= \max\{t_{i}(V) - i : i \geq 1\} \\
  &\in\{-2,-1,0,1,\dots\} \cup\{\infty\} \, ,
\end{align*}
for every $\FI$-module $V$. We say that $V$ is \textbf{generated in degrees} $\leq g$ if $\tgen(V) \leq g$, and that $V$ is \textbf{presented in finite degrees} if $\tgen(V)$ and $\trel(V)$ are both finite. We call $\reg(V)$ the \textbf{regularity} of $V$.

Our second main result is that the stable degree $\weak(V)$ and the regularity $\reg(V)$ together characterize the second polynomial condition.

\begin{thmx}\label{thm:reg}
 For every pair of integers $r \geq -1$, $M \geq 0$, we have 
\begin{align*}
 \poly_{2}(r,M) = \big\{ V \in \lMod{\FI} :
 \weak(V) \leq r \text{ and } \reg(V) \leq M-1
 \big\} \, .
\end{align*}
\end{thmx}

%
%

\subsection{Twisted homological stability with $\FI$-module coefficients}
For any $\FI$-module $V$ and homological degree $k \geq 0$, there is a sequence of maps 
\begin{align*}
 \co_{k}(\sym{0};V_{0}) \rarr \co_{k}(\sym{1};V_{1}) \rarr \co_{k}(\sym{2};V_{2}) \rarr \cdots
\end{align*}
between the homology groups of the symmetric groups twisted by $V_{n}$'s. For the stabilization of this sequence, most recently Putman \cite[Theorem A, Theorem A']{putman-poly} established explicit ranges for the class $\poly_{2}(r,M)$ in terms of $r,M$. We give ranges for the class $\poly_{1}(r,L)$ in terms of $r,L$.

\begin{thmx} \label{twisted-ranges}
 Let $V$ be an $\FI$-module and $r,L \geq 0$ be integers such that $V \in \poly_{1}(r,L)$. Then for every $k \geq 0$, the map
$\co_{k}(\sym{n};V_{n}) \rarr \co_{k}(\sym{n+1};V_{n+1})$ is 
\begin{itemize}
 \item an isomorphism for $
 n \geq 
\begin{cases}
 2k+r+1 & \text{if $L = 0$,}
 \\
 2k + r + \floor*{\frac{L+1}{2}} + 2 & \text{if $1 \leq L \leq 2r-2$,}
 \\ 
 \max\{2k+2r+1,\,L\}& \text{if $L \geq \max\{1,2r-1\}$,}
\end{cases}
 $
 \vspace{0.3cm}
\item and a surjection for $
  n \geq 
\begin{cases}
 2k+r & \text{if $L = 0$,}
 \\
 2k + r + \floor*{\frac{L+1}{2}} + 1 & \text{if $1 \leq L \leq 2r-2$,}
 \\ 
 \max\{2k+2r,\,L\}& \text{if $L \geq \max\{1,2r-1\}$.}
\end{cases}
 $
\end{itemize}
\end{thmx}

\begin{rem} Under the same hypotheses with Theorem \ref{twisted-ranges}, \cite[Theorem 5.1]{rw-wahl-v3} establishes
\begin{itemize}
 \item an isomorphism for $n \geq \max\{2L+1,2k+2r+2\}$,
 \vspace{0.1cm}
 \item and a surjection for $n \geq \max\{2L+1, 2k+2r\}$.
\end{itemize}
The ranges in Theorem \ref{twisted-ranges} are improvements over these.
\end{rem}

\subsection{$\operatorname{SL}_{n}^{\fU}$-stability ranges for congruence subgroups}

For every ring $R$, the assignment $n \mapsto \GL_{n}(R)$ defines an $\FI$-group (a functor from $\FI$ to the category of groups), for which we write $\GL_{\bul}(R)$. If $I$ is an ideal of $R$, as the  kernel of the mod-$I$ reduction we get a smaller $\FI$-group 
\begin{align*}
 \GL_{\bul}(R,I) := \ker \big(\!\GL_{\bul}(R) \rarr \GL_{\bul}(R/I) \big) \, ,
\end{align*}
called the $I$\textbf{-congruence subgroup} of $\GL_{\bul}(R)$.
For each $k \geq 0$ and abelian group $\ab$, taking the $k$-th homology with coefficients in $\ab$ defines an $\FI$-module $\co_{k}(\GL_{\bul}(R,I);\ab)$. 

We wish to extend the $\sym{n}$-action on $\co_{k}(\GL_{n}(R,I);\ab)$ to an action of a linear group and formulate representation stability over it, in accordance with \cite[fifth remark]{putman-congruence}.

\paragraph{Special linear group with respect to a subgroup of the unit group.} For a commutative ring $A$ and a subgroup $\fU \leq A^{\times}$, we write
\begin{align*}
 \SL_{n}^{\fU}(A) := \{f \in \GL_{n}(A) : \det(f) \in \fU\} \, ,
\end{align*}
so that we interpolate between $\SL_{n}(A) \leq \SL_{n}^{\fU}(A) \leq \GL_{n}(A)$  as we vary $1 \leq \fU \leq A^{\times}$. Note that we are using the notation in \cite{putman-sam}, whereas in \cite{mpw-torelli-H2} and \cite{mpp-secondary} this group is denoted $\GL_{n}^{\fU}(A)$.

\begin{hyp} \label{surj-hyp}
 In the triple $(R,I,n_{0})$, we have a commutative ring $R$, an ideal $I$ of $R$, and an integer $n_{0} \in \nn$ such that the mod-$I$ reduction  
\begin{align*}
  \SL_{n}(R) \rarr \SL_{n}(R/I)
\end{align*}
for the special linear group is surjective for every $n \geq n_{0}$. 
\end{hyp}

\paragraph{Stable rank of a ring.} Let $R$ be a nonzero unital (associative) ring. A column vector $\mathbf{v} \in \mat_{m \times 1}(R)$ of size $m$ is \textbf{unimodular} if there is a row vector $\mathbf{u} \in \mat_{1 \times m}(R)$ such that $\mathbf{u}\mathbf{v} = 1$. Writing $\mathbf{I}_{r} \in \mat_{r \times r}(R)$ for the identity matrix of size $r$, we say
a column vector $\mathbf{v}$ of size $m$ is \textbf{reducible} if there exists  $\mathbf{A} \in \mat_{(m-1) \times m}(R)$ with block form  $\mathbf{A} = [\mathbf{I}_{m-1} \,|\, \mathbf{x}]$ such that the column vector $\mathbf{A} \mathbf{v}$ (of size $m-1$) is unimodular. We write $\sr(R) \leq s$ if every unimodular column vector of size $> s$ is reducible. 

\begin{rem} \label{surj-rem}
We make a few observations about Hypothesis \ref{surj-hyp}:
\begin{birki}
\item It is straightforward to check that the triple $(R,I,n_{0})$ satisfies \ref{surj-hyp} if and only if setting $\fU := \{x + I : x \in R^{\times}\}$, there is a short exact sequence 
\begin{align*}
 1 \rarr \GL_{n}(R,I) \rarr \GL_{n}(R) \rarr \SL_{n}^{\fU}(R/I) \rarr 1 
\end{align*}
of groups in the range $n \geq n_{0}$ where the epimorphism is the mod-$I$ reduction. Consequently, for every $n \geq n_{0}$ and any coefficients $\ab$, the conjugation $\GL_{n}(R)$-action on the homology groups $\co_{\star}(\GL_{n}(R,I),\ab)$ descends to an $\SL_{n}^{\fU}(R/I)$-action. It is this action for which we will obtain an improved representation stability range.\vspace{0.1cm}
\item For a Dedekind domain $R$ and any ideal $I$ of $R$, the triple $(R,I,0)$ satisfies Hypothesis \ref{surj-hyp}, see \cite[page 2]{clark-order-type}.\vspace{0.1cm}
\item If $\SL_{n}(R/I)$ is generated by elementary matrices for $n \geq n_{0}$, then $(R,I,n_{0})$ satisfies Hypothesis \ref{surj-hyp}.\vspace{0.1cm}
\item If the $K$-group $\operatorname{SK}_{1}(R/I) = 0$ (equivalently, the natural map $\operatorname{K}_{1}(R/I) \rarr (R/I)^{\times}$ is an isomorphism) and $\sr(R/I) \leq s < \infty$, then by (3) and \cite[4.3.8]{hahn-omeara}, the triple $(R,I,s+1)$ satisfies Hypothesis \ref{surj-hyp}.
\end{birki}
\end{rem}

\begin{thmx} \label{congruence-larger-action}
 Let $I$ be a proper ideal in a commutative ring $R$ and $s,n_{0} \in \nn$ such that 
\begin{itemize}
 \item $
 \sr(R) \leq s  
$, and
 \item the triple $(R,I,n_{0})$ satisfies Hypothesis \ref{surj-hyp} with $n_{0} \leq 2s+3$.
\end{itemize}
Then writing 
\begin{align*}
 \fU := \{x + I : x \in R^{\times}\} \, , \quad
 \gaga_{n} := \SL_{n}^{\fU}(R/I) \, ,
\end{align*}
for every homological degree $k \geq 1$ and abelian group $\ab$, there is a coequalizer diagram 
\begin{align*}
 \Ind_{\gaga_{n-2}}^{\gaga_{n}} \co_{k}(\GL_{n-2}(R,I);\ab) \rightrightarrows   
 \Ind_{\gaga_{n-1}}^{\gaga_{n}} \co_{k}(\GL_{n-1}(R,I);\ab)
 \rarr
 \co_{k}(\GL_{n}(R,I);\ab)
\end{align*}
of $\zz\gaga_{n}$-modules whenever 
\begin{align*}
 n \geq 
\begin{cases}
 2s + 5 & \text{if $k =1$,}
 \\
 4k + 2s + 2 & \text{if $k \geq 2$.}
\end{cases}
\end{align*}
\end{thmx}

\begin{rem}
The best stable ranges established previously in the literature under the assumptions (with $n_{0} = 0$) of Theorem \ref{congruence-larger-action} are due to Miller--Patzt--Petersen \cite[proof of Theorem 1.4, page 46]{mpp-secondary}: they obtained the conclusion of Theorem \ref{congruence-larger-action} in the range $n \geq 8k + 4s + 9$.
\end{rem}

\section{Homological algebra of $\FI$-modules}
\subsection{Regularity in terms of local cohomology}
We first recall a characterization of the regularity by Nagpal--Sam--Snowden.
\begin{thm}[{\cite[Theorem 1.1, Remark 1.3]{nss-regularity}}]\label{nss}
 Let $V$ be an $\FI$-module presented in finite degrees which is not $\cofi{0}$-acyclic. Then 
\begin{align*}
 \reg(V) &= \max\{h^{j}(V) + j : \locoh{j}(V) \neq 0\} \\
 &= \max\{h^{j}(V) + j : h^{j}(V) \geq 0\} \, .
\end{align*}
\end{thm}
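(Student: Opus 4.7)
My plan is to prove both equalities by analyzing a co-resolution of $V$ by semi-induced (i.e.\ $\cofi{0}$-acyclic) $\FI$-modules, together with the long exact sequences in $\FI$-homology and local cohomology. The starting observation is that the second equality is essentially tautological: since $\locoh{j}(V) = 0$ if and only if $\deg(\locoh{j}(V)) = h^{j}(V) = -1$, the sets $\{h^{j}(V) + j : \locoh{j}(V) \neq 0\}$ and $\{h^{j}(V) + j : h^{j}(V) \geq 0\}$ coincide. So the real content is the identity
\[
 \reg(V) = \max\{h^{j}(V) + j : h^{j}(V) \geq 0\} \, .
\]

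The plan is to verify this in two base cases and then propagate by induction. For a torsion $\FI$-module $W$, we have $W = \locoh{0}(W)$, so the right-hand side equals $\deg(W)$; direct inspection of an $\FI$-resolution (or a Koszul-type argument) shows $\reg(W) = \deg(W)$ as well. For a semi-induced $\FI$-module $U$, both sides are $-\infty$ (equivalently, below the ambient floor for each invariant): $\cofi{i}(U) = 0$ for $i \geq 1$ by definition, and $\locoh{j}(U) = 0$ for all $j \geq 0$ by a standard vanishing result for induced modules.

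For the inductive step, I would use that a $V$ presented in finite degrees which is not semi-induced fits into a short exact sequence
\[
 0 \rarr V \rarr U \rarr Q \rarr 0
\]
where $U$ is semi-induced and $Q$ is built from torsion pieces (the first step of the ``derived saturation''), together with an induction on the length of the resulting co-resolution (which is bounded by the stable degree). Taking long exact sequences: since $\locoh{j}(U) = 0$ for all $j$, one reads off $h^{j}(V) = h^{j-1}(Q)$ for $j \geq 1$ and $h^{0}(V) = -1$; since $\cofi{i}(U) = 0$ for $i \geq 1$, one reads off $t_{i}(V) = t_{i+1}(Q)$ for $i \geq 1$. Thus both quantities transform under the same index shift, and the inductive hypothesis applied to $Q$ (which has strictly shorter co-resolution length, since it is already torsion in the simplest case) yields the identity for $V$.

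The main obstacle is the existence and finite length of the co-resolution by semi-induced modules, which requires the presentation-in-finite-degrees hypothesis in a nontrivial way and is really where the structure theory of $\FI$-modules does the work. A secondary obstacle is verifying that the degree shifts in the two long exact sequences synchronize exactly, so that applying $\max\{\cdot + j\}$ on one side and $\max\{\cdot - i\}$ on the other produces the same number; this is essentially a bookkeeping check, but needs care at the lowest indices, which is also where the condition $h^{j}(V) \geq 0$ (rather than $j \geq 0$) enters to prevent spurious contributions from the torsion-free part of $V$.
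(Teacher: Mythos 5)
This statement is not proved in the paper at all: it is quoted verbatim as \cite[Theorem 1.1, Remark 1.3]{nss-regularity} and used as a black box. So there is no ``paper's own proof'' to compare against; what you have written is a sketch of how one might reconstruct the Nagpal--Sam--Snowden argument.

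Your reduction of the second equality to the first is fine, and the torsion base case is correct (it matches Corollary \ref{finite-support} in the paper). The inductive step, however, has a genuine gap. You posit a short exact sequence
\[
 0 \rarr V \rarr U \rarr Q \rarr 0
\]
with $U$ semi-induced (i.e.\ $\cofi{0}$-acyclic). Such a monomorphism into a semi-induced module exists only if $V$ is torsion-free: the canonical map from $V$ to its derived saturation (equivalently, to $\shift{V}{N}$ for $N \gg 0$) has kernel exactly $\locoh{0}(V)$, which is the torsion submodule of $V$ and need not vanish. For a general $V$ presented in finite degrees you must first pass to $V/\locoh{0}(V)$ and control the contribution of the torsion part separately, and this is precisely where the $j = 0$ and $i = 1$ terms --- the ones you flag as needing ``care at the lowest indices'' --- enter in a way that is not just bookkeeping. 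Concretely, $\cofi{1}(Q)$ is the kernel of $\cofi{0}(V) \rarr \cofi{0}(U)$, not zero, so $t_{1}(Q)$ carries real information that your index-shifting formula $t_{i}(V) = t_{i+1}(Q)$ for $i \geq 1$ does not account for on the $Q$ side (the inductive hypothesis involves a maximum over $i \geq 1$ for $Q$, including $i = 1$). Finally, the inductive quantity ``co-resolution length'' is not pinned down, and you would need to argue that $Q$ both strictly decreases it and still satisfies the ``not $\cofi{0}$-acyclic'' hypothesis, neither of which is automatic. These are not minor details; resolving them is essentially the content of the theorem, which is why the paper outsources it to Nagpal--Sam--Snowden rather than proving it inline.
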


\begin{rem}
 Under the hypotheses of Theorem \ref{nss}, by \cite[Theorem 2.4]{bahran-reg} and \cite[Corollary 2.13]{cmnr-range}, we have
\begin{align*}
 \empt \neq \{j : \locoh{j}(V) \neq 0\} = \{j : h^{j}(V) \geq 0\} \subseteq \{0,\,\dots,\,\weak(V)+1\}
\end{align*}
\end{rem}

\begin{cor}\label{finite-support}
 Let $V$ be a nonzero $\FI$-module with $\deg(V) < \infty$. Then $V$ is presented in finite degrees, and 
\begin{align*}
 h^{j}(V) = 
\begin{cases}
 \deg(V) = \reg(V) & \text{if $j = 0$,}
 \\
 -1 & \text{otherwise.}
\end{cases}
\end{align*}
\end{cor}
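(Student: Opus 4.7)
Set $d := \deg(V)$. First, $V$ is torsion: any element $x \in V_{S}$ maps to $0 \in V_{T}$ under any injection $S \emb T$ with $|T| > d$. Hence $\locoh{0}(V) = V$, giving $h^{0}(V) = d$, and moreover $\weak(V) = -1$ since $\deriv^{0}(V) = V$ is already torsion.

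The plan is then to apply Theorem \ref{nss} together with the remark following it. For this I need to verify its two hypotheses on $V$. First, $V$ is not $\cofi{0}$-acyclic: any nonzero $\cofi{0}$-acyclic $\FI$-module is assembled from induced modules of the form $M(k)$, each of which satisfies $M(k)_{n} \neq 0$ for all $n \geq k$, so such a module has infinite degree. Second, $V$ is presented in finite degrees: $\tgen(V) \leq d$ is immediate since $\cofi{0}(V)_{S}$ is a quotient of $V_{S}$, and $\trel(V) < \infty$ follows from the general principle, implicit in \cite{cefn,cmnr-range}, that $\FI$-homology preserves the property of being supported in a bounded range of degrees.

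Granting both hypotheses, Theorem \ref{nss} gives
\begin{align*}
 \reg(V) = \max\{h^{j}(V) + j : h^{j}(V) \geq 0\},
\end{align*}
and the remark following it constrains the contributing $j$'s to the set $\{0, \ldots, \weak(V) + 1\} = \{0\}$. Thus only $j = 0$ survives, yielding simultaneously $\reg(V) = h^{0}(V) = d$ and $h^{j}(V) = -1$ for every $j \geq 1$, which is precisely the claim.

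The main obstacle is the auxiliary claim $\trel(V) < \infty$: intuitively clear from the fact that $V$ is supported in only finitely many degrees, but requiring either an explicit chain-complex computation or an appeal to $\FI$-homology vanishing results from the cited works. Once this is secured, the remainder is essentially a bookkeeping exercise in Theorem \ref{nss} and its remark.
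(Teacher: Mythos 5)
Your strategy is essentially the paper's: establish that $V$ is torsion and presented in finite degrees, verify non-acyclicity, then feed everything into Theorem \ref{nss}. The one genuine deviation is in how you kill $h^{j}(V)$ for $j \geq 1$: you observe $\weak(V) = -1$ and invoke the constraint $\{j : h^{j}(V) \geq 0\} \subseteq \{0,\dots,\weak(V)+1\} = \{0\}$ from the remark, whereas the paper computes $\locoh{j}(V)$ outright from \cite[Theorem 2.10]{cmnr-range} applied to the one-term complex $0 \to V \to 0 \to \cdots$. Both work; your route is a nice use of the bound on local cohomological dimension, while the paper's is more self-contained in that it produces the explicit $\locoh{j}(V)$ rather than just vanishing.

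The gap you flagged is the real one, and your proposed patch ("the general principle that $\FI$-homology preserves the property of being supported in a bounded range of degrees") is not a theorem you can cite as stated and is not how the paper closes it. The clean fix is already in your hands: you have shown $h^{0}(V) = d < \infty$ and $\tgen(V) \leq d < \infty$, and \cite[Theorem A]{ramos-coh} (used together with \cite[Proposition 2.5]{bahran-reg}) says precisely that an $\FI$-module generated in finite degrees with finite $h^{0}$ is presented in finite degrees. That citation replaces the hand-waving and makes the proof complete. As a secondary remark, your argument that $V$ is not $\cofi{0}$-acyclic (via the structure of semi-induced modules) is correct but more indirect than needed; since you already know $\locoh{0}(V) = V \neq 0$, you can appeal directly to \cite[Theorem 2.4]{bahran-reg}, which says $\cofi{0}$-acyclic modules have vanishing local cohomology in all degrees.
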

\begin{proof}
 $V$ is certainly generated in degrees $\leq \deg(V)$ and also $h^{0}(V) \leq \deg(V)$. Thus by \cite[Proposition 2.5]{bahran-reg} and \cite[Theorem A]{ramos-coh}, $V$ is presented in finite degrees. Now $V$ and the complex 
$0 \rarr V \rarr 0 \rarr 0 \rarr \cdots$ satisfy the assumptions of \cite[Theorem 2.10]{cmnr-range}, hence 
\begin{align*}
 \locoh{j}(V) = 
\begin{cases}
 V & \text{if $j = 0$,}
 \\
 0 & \text{otherwise.}
\end{cases}
\end{align*}
The rest follows from Theorem \ref{nss}.
\end{proof}

\subsection{The derivative and local cohomology}
In this section, we investigate the relationship between the local cohomology of an $\FI$-module and that of its derivative. 

We write $\kert := \ker (\id_{\lMod{\FI}} \rarr \shift{}{})$, so that we have an exact sequence 
\begin{align*}
 0 \rarr \kert \rarr \id_{\lMod{\FI}} \rarr \shift{}{} \rarr \deriv \rarr 0
\end{align*}
of functors $\lMod{\FI} \rarr \lMod{\FI}$.

\begin{lem}\label{h0-vs-K}
For every $\FI$-module $V$, we have $\deg(\kert V) = h^{0}(V)$.
\end{lem}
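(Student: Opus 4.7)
The plan is to prove a two-sided inequality. For the inequality $\deg(\kert V) \leq h^{0}(V)$, I will show the stronger statement that $\kert V$ is a torsion $\FI$-submodule of $V$, so that $\kert V \subseteq \locoh{0}(V)$ and the degree inequality is immediate. Indeed, for any finite set $S$ and $x \in (\kert V)_{S}$, the element $x$ lies in the kernel of $V_{\iota_{S}} \colon V_{S} \rarr V_{S \sqcup \{*\}}$ where $\iota_{S}$ is the canonical inclusion; the injection $\iota_{S}$ itself then witnesses that $x$ is torsion, so $x \in \locoh{0}(V)_{S}$.

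For the reverse inequality $\deg(\kert V) \geq h^{0}(V)$, write $W := \locoh{0}(V)$ and $d := \deg(W) = h^{0}(V)$. If $d = -1$ there is nothing to prove, so assume $d \geq 0$ and pick a nonzero $x \in W_{T}$ with $|T| = d$. The idea is to exploit the fact that $W$ is an $\FI$-submodule of $V$ together with the degree constraint on $W$: since $|T \sqcup \{*\}| = d+1 > d$, we have $W_{T \sqcup \{*\}} = 0$, and the naturality of $W \hookrightarrow V$ means that $V_{\iota_{T}}$ restricted to $W_{T}$ factors through $W_{T \sqcup \{*\}} = 0$. Therefore $V_{\iota_{T}}(x) = 0$, so $x$ represents a nonzero element of $(\kert V)_{T}$, yielding $\deg(\kert V) \geq d$.

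Combining the two inequalities gives the claimed equality. There is no real obstacle here; the argument is essentially the observation that the kernel-of-shift description of $\kert$ detects precisely the ``topmost layer'' of the torsion submodule, which by the degree hypothesis on $W$ lives entirely in degrees where one extra element in the underlying set already forces the value to vanish.
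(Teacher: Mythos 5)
Your argument for $\deg(\kert V) \leq h^{0}(V)$ is exactly the paper's (showing $\kert V \subseteq \locoh{0}(V)$), and your argument for $\deg(\kert V) \geq h^{0}(V)$ in the case $0 \leq d := h^{0}(V) < \infty$ is correct and, if anything, slightly cleaner than the paper's: you use directly that $\locoh{0}(V)$ is a subfunctor of $V$ with $\locoh{0}(V)_{T \sqcup \{*\}} = 0$, whereas the paper factors a witnessing injection and then appeals to the same degree bound. The underlying observation is the same.

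However, there is a genuine gap: you never treat the case $h^{0}(V) = \infty$. You write ``assume $d \geq 0$ and pick a nonzero $x \in W_{T}$ with $|T| = d$,'' which is meaningless when $d = \infty$, and the crucial step $W_{T \sqcup \{*\}} = 0$ rests on $T$ sitting in the \emph{top} degree of $W$, which does not exist when $\deg(W) = \infty$. Concretely, for an arbitrary large $T$ with $W_{T} \neq 0$ you have no reason to expect $W_{T \sqcup \{*\}} = 0$, so you cannot conclude $x \in \kert V$. The paper handles this case by a different argument: given any $d$, pick a nonzero torsion $x \in V_{S}$ with $|S| \geq d$, take $T$ of \emph{minimal} size admitting an injection $\iota \colon S \emb T$ with $V_{\iota}(x) = 0$, factor $\iota$ through an intermediate set $A$ of size $|T|-1$, and use minimality to get a nonzero element of $(\kert V)_{A}$ with $|A| \geq d$. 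You would need to add an argument along these lines (or some other mechanism producing elements of $\kert V$ in arbitrarily high degree) to close the gap.
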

\begin{proof}
 Since $\kert V$ is a torsion submodule of $V$, we have $\kert V \subseteq \locoh{0}(V)$ and hence 
\begin{align*}
 \deg(\kert V) \leq \deg(\locoh{0}(V)) = h^{0}(V) \, .
\end{align*}
There is nothing to show when $h^{0}(V) = -1$, so we consider two cases:
\begin{itemize}
 \item $h^{0}(V) = \infty$. To show $\deg(\kert V) = \infty$, we will show that for every $d \in \nn$ we have $\deg(\kert V) \geq d$. Because $h^{0}(V) \geq d$, there exists a torsion element $x \in V_{S} -\{0\}$ of $V$ with $|S| \geq d$. Because $x$ is torsion, the set 
\begin{align*}
 \{|T|: V_{\iota}(x) = 0 \text{ for some } \iota \colon S \emb T\} \subseteq \nn
\end{align*}
is nonempty, hence has a least element, say $N$. Noting $N > d$, let $A$ be a finite set of size $N-1$ and $f \colon S \emb A$ so by the minimality of $N$ we have $0 \neq V_{f}(x) \in (\kert V)_{A}$ and  $\deg(\kert V) \geq |A| = N-1 \geq d$.
 \item $0 \leq d := h^{0}(V) < \infty$: we pick a torsion element $x \in V_{S} -\{0\}$ with $|S| = d$ and we claim that $V_{\iota}(x) = 0$ for the embedding $\iota \colon S \emb S \sqcup \{\star\}$. There is a finite set $T$ and an injection $f \colon S \emb T$ such that $V_{f}(x) = 0$. As $x \neq 0$, $f$ cannot be an isomorphism so $|T| > |S|$ and $f = g \circ \iota$ for some injection $g \colon S \sqcup \{\star\} \emb T$. As $V_{g}(V_{\iota}(x)) = 0$, the element $V_{\iota}(x)$ is torsion but it lies in degree $d+1$, forcing $V_{\iota}(x) = 0$ and hence $x \in \kert V$, showing $\deg(\kert V) \geq d$. 
\end{itemize}
\end{proof}

\begin{prop} \label{fp-deriv}
 Given an $\FI$-module $V$, the following are equivalent: 
\begin{birki}
 \item $V$ is presented in finite degrees.
 \item $h^{0}(V) < \infty$ and $\deriv V$ is presented in finite degrees.
\end{birki}
\end{prop}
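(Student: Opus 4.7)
The plan is to split the four-term exact sequence
\[0 \to \kert V \to V \to \shift{V}{} \to \deriv V \to 0\]
into two short exact sequences
\[(A)\colon 0 \to \kert V \to V \to V/\kert V \to 0, \qquad (B)\colon 0 \to V/\kert V \to \shift{V}{} \to \deriv V \to 0,\]
and feed them through long exact sequences in $\FI$-homology. The three auxiliary facts I will invoke are Lemma \ref{h0-vs-K} (so $\deg(\kert V) = h^{0}(V)$), Corollary \ref{finite-support} (modules of finite degree are presented in finite degrees), and the isomorphism $\shift{M(d)}{} \cong M(d) \oplus M(d-1)^{\oplus d}$ together with exactness of $\shift{}{}$, which shows $\shift{}{}$ preserves being presented in finite degrees with $t_{i}(\shift{W}{}) \leq t_{i}(W)$ for all $W$ and $i$. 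Applying the snake lemma to the natural transformation $\id \to \shift{}{}$ paired with $0 \to X \to Y \to Z \to 0$ also yields the six-term exact sequence
\[0 \to \kert X \to \kert Y \to \kert Z \to \deriv X \to \deriv Y \to \deriv Z \to 0,\]
which I will use to transfer finiteness across $\deriv$.

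For $(1)\Rightarrow(2)$: if $V$ is presented in finite degrees, Theorem \ref{nss} gives $h^{0}(V) \leq \reg(V) < \infty$, and then Lemma \ref{h0-vs-K} with Corollary \ref{finite-support} makes $\kert V$ presented in finite degrees. The shift $\shift{V}{}$ is also presented in finite degrees by the observation above. Applying $\cofi{*}$-long exact sequences to $(A)$ then $(B)$, each time using that when two of the three terms are presented in finite degrees so is the third, gives $V/\kert V$ and then $\deriv V$ presented in finite degrees.

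For the converse $(2)\Rightarrow(1)$: Lemma \ref{h0-vs-K} and Corollary \ref{finite-support} again supply $\kert V$ presented in finite degrees, so by the long exact sequence of $(A)$ it suffices to bound $t_{0}(V)$ and $t_{1}(V)$. I would first establish the direct bound $t_{0}(V) \leq t_{0}(\deriv V) + 1$: for $x \in V_{S}$ with $|S| > t_{0}(\deriv V)+1$ and any chosen $s_{0} \in S$, the identification $V_{S} = V_{(S\setminus\{s_{0}\}) \sqcup \{*\}} = \shift{V}{S\setminus\{s_{0}\}}$ together with $\cofi{0}(\deriv V)_{S\setminus\{s_{0}\}}=0$ expresses the image of $x$ in $(\deriv V)_{S\setminus\{s_{0}\}}$ as a sum of images from proper subsets $T' \subsetneq S\setminus\{s_{0}\}$; lifting and subtracting leaves a residue in the image of $V_{S\setminus\{s_{0}\}} \to V_{S}$, so $x$ lies in the span of images from the proper subsets $S\setminus\{s_{0}\}$ and $T' \sqcup\{s_{0}\}$ of $S$, i.e. $\cofi{0}(V)_{S} = 0$.

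For $t_{1}(V)$, take a surjection $P_{0} \twoheadrightarrow V$ from a free $\FI$-module generated in degrees $\leq t_{0}(V)$ and put $K := \ker(P_{0}\to V)$, so $t_{1}(V) = t_{0}(K)$. Applying the six-term sequence to $0 \to K \to P_{0} \to V \to 0$ and using $\kert P_{0}=0$ produces
\[0 \to \kert V \to \deriv K \to \deriv P_{0} \to \deriv V \to 0.\]
Splitting this four-term sequence as $0 \to \kert V \to \deriv K \to I \to 0$ and $0 \to I \to \deriv P_{0} \to \deriv V \to 0$ and applying $\cofi{*}$-long exact sequences bounds $t_{0}(I) \leq \max\{t_{1}(\deriv V), t_{0}(\deriv P_{0})\}$ and then $t_{0}(\deriv K) \leq \max\{h^{0}(V), t_{0}(I)\}$, all finite. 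Reapplying the direct $t_{0}$-bound to $K$ yields $t_{0}(K) \leq t_{0}(\deriv K) + 1 < \infty$, completing the proof. The principal obstacle is the direct decomposition argument for the $t_{0}$-bound itself, which has to be set up correctly so that both the proper subset $S\setminus\{s_{0}\}$ and the "thickened" subsets $T' \sqcup \{s_{0}\}$ are accounted for; once that bound is in hand, it is used twice (for $V$ and for $K$) and the remaining work is routine long-exact-sequence bookkeeping.
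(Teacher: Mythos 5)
Your proof is correct, but it takes a genuinely different route from the paper's. For $(1)\Rightarrow(2)$ the paper simply cites Gan's result that $\shift{}{}$ preserves presentation in finite degrees together with Ramos's Noetherianity-type theorems to get $\kert V$, $\deriv V$ presented in finite degrees and $h^{0}(V)<\infty$; you instead reconstruct the shift claim from the free-module decomposition $\shift{M(d)}{} \cong M(d)\oplus M(d-1)^{\oplus d}$ and run two long exact sequences in $\cofi{*}$, which is more self-contained (and for this direction you only need the easy ``third term of a short exact sequence'' half of the bookkeeping, so no deep regularity input is required). For $(2)\Rightarrow(1)$ the difference is sharper: the paper exploits $\weak(\deriv V)<\infty$ to iterate the derivative until it lands in a torsion, finitely generated (hence finite-degree) module, then bounds the generation degree of $V$ all the way back up via the cited \cite[Proposition 4.6]{ce-homology}, and finally upgrades generation to presentation using $h^{0}(V)<\infty$. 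You instead prove the single-step estimate $t_{0}(V)\le t_{0}(\deriv V)+1$ directly by the lift-and-subtract argument in $\shift{V}{}\twoheadrightarrow\deriv V$ (which is in effect an unwinding of the one-step case of \cite[Proposition 4.6]{ce-homology}), and then control $t_{1}(V)$ by applying the snake-lemma sequence $0\to\kert V\to\deriv K\to\deriv P_{0}\to\deriv V\to 0$ to a syzygy $K$ and reusing the same $t_{0}$ bound for $K$. Your route is more elementary in the sense that it does not pass through the stable degree at all and avoids the iterated-derivative detour; the paper's route is shorter on the page because it outsources the quantitative content to external results. Both are valid; you might mention that the paper's argument implicitly also needs $h^{0}(V)<\infty$ at the final step (to pass from generated to presented in finite degrees), which is parallel to where $h^{0}(V)=\deg(\kert V)$ enters your syzygy computation of $t_{0}(\deriv K)$.

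One small caveat worth flagging: the auxiliary claim $t_{i}(\shift{W}{})\le t_{i}(W)$ for \emph{all} $i$ is a side remark you do not actually need here, and over $\zz$ it would require a minimal-resolution argument to justify in full; what you genuinely use is only that a finite-degree free presentation of $W$ shifts to one of $\shift{W}{}$, which is immediate from the $\shift{M(d)}{}$ decomposition and exactness. Similarly, the appeal to Theorem~\ref{nss} for $h^{0}(V)\le\reg(V)<\infty$ formally needs $V$ not $\cofi{0}$-acyclic; in the acyclic case $h^{0}(V)=-1$ anyway, so the conclusion still holds, but citing \cite[Theorem A]{ramos-coh} as the paper does sidesteps the case split.
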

\begin{proof}
 Assume (1). Then by \cite[Theorem 1]{gan-shift-seq} $\shift{V}{}$ is presented in finite degrees, and hence so are $\kert V$ and $\deriv V$ by \cite[Theorem B]{ramos-coh} and \cite[Proposition 2.5]{bahran-reg}. We have $h^{0}(V) < \infty$ by \cite[Theorem A]{ramos-coh}.

Conversely, assume (2). By \cite[Proposition 2.9, part (4)]{cmnr-range},  $u := \weak(\deriv V) < \infty$, so $\deriv^{u+2} V =\deriv^{u+1}\deriv V$ is torsion. Also, by applying the implication (1) $\imp$ (2) to $\deriv V$ and iterating it, $\deriv^{u+2} V$ is presented in finite degrees. Being a torsion $\FI$-module generated in finite degrees, $\deriv^{u+2} V$ has finite degree, say $d$. Therefore by \cite[Proposition 4.6]{ce-homology}, $V$ is generated in degrees $\leq u + d + 2$. We conclude by \cite[Theorem A]{ramos-coh}.
\end{proof}
\begin{prop} \label{locoh-LES} Given an $\FI$-module $V$, the following hold:
\begin{birki}
 \item If $h^{0}(V) < \infty$, then there is a long exact sequence 
\begin{align*}
\xymatrixrowsep{0.65cm}
 \xymatrix{
   & & 0 
  \ar[r] & \kert V
  \ar@{->} `r/8pt[d] `/10pt[l] `^dl[ll]|{} `^r/3pt[dll] [dll] 
  \\ 
  & \locoh{0}(V) \ar[r] & \shift{\!\locoh{0}(V)}{} 
  \ar[r] & \locoh{0}(\deriv V) \ar[r] & \cdots
  \\
  & \vdots & \vdots & \vdots
  \\
  \cdots \ar[r] & \locoh{j}(V) \ar[r] & \shift{\!\locoh{j}(V)}{}  
  \ar[r] & \locoh{j}(\deriv V)
 \ar@{->} `r/8pt[d] `/10pt[l] `^dl[ll]|{} `^r/3pt[dll] [dll] 
  \\ 
  & \locoh{j+1}(V) \ar[r] & \shift{\!\locoh{j+1}(V)}{} 
  \ar[r] & \locoh{j+1}(\deriv V) \ar[r] & \cdots
    }
\end{align*}
\vspace{0.1cm}
\item If $V$ is presented in finite degrees, (1) holds such that every $\FI$-module in the sequence has finite degree.
\end{birki}
\end{prop}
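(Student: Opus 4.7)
The plan is to exploit the canonical four-term exact sequence
\begin{align*}
 0 \rarr \kert V \rarr V \rarr \shift{V}{} \rarr \deriv V \rarr 0
\end{align*}
displayed at the start of this subsection by splitting it at the intermediate module $W := \mathrm{image}(V \rarr \shift{V}{}) = V/\kert V$, obtaining two short exact sequences
\begin{align*}
 0 \rarr \kert V \rarr V \rarr W \rarr 0 \qquad \text{and} \qquad 0 \rarr W \rarr \shift{V}{} \rarr \deriv V \rarr 0,
\end{align*}
whose associated long exact sequences of $\locoh{\bullet}$ can then be spliced together to produce the claimed sequence.

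For part (1), the hypothesis $h^{0}(V) < \infty$ combined with Lemma \ref{h0-vs-K} gives $\deg(\kert V) = h^{0}(V) < \infty$, so Corollary \ref{finite-support} yields $\locoh{0}(\kert V) = \kert V$ and $\locoh{j}(\kert V) = 0$ for every $j \geq 1$. The long exact sequence of the first SES therefore collapses into a short exact sequence $0 \rarr \kert V \rarr \locoh{0}(V) \rarr \locoh{0}(W) \rarr 0$ together with isomorphisms $\locoh{j}(V) \cong \locoh{j}(W)$ for $j \geq 1$. Invoking the natural isomorphism $\locoh{j}(\shift{V}{}) \cong \shift{\!\locoh{j}(V)}{}$, the long exact sequence of the second SES assumes the form
\begin{align*}
 \cdots \rarr \locoh{j}(W) \rarr \shift{\!\locoh{j}(V)}{} \rarr \locoh{j}(\deriv V) \rarr \locoh{j+1}(W) \rarr \cdots.
\end{align*}
Substituting $\locoh{j}(W) \cong \locoh{j}(V)$ for $j \geq 1$ and prepending the collapsed first sequence via the composite $\locoh{0}(V) \rarr \locoh{0}(W) \rarr \shift{\!\locoh{0}(V)}{}$ (whose kernel is exactly $\kert V$, since $\locoh{0}(V) \rarr \locoh{0}(W)$ is surjective with kernel $\kert V$ and $\locoh{0}(W) \rarr \shift{\!\locoh{0}(V)}{}$ is injective) yields the desired long exact sequence.

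For part (2), Proposition \ref{fp-deriv} furnishes $h^{0}(V) < \infty$ (so part (1) applies) and also that $\deriv V$ is presented in finite degrees; moreover $\shift{V}{}$ is presented in finite degrees by \cite[Theorem 1]{gan-shift-seq}. Finite degree of each term then follows routinely: $\deg(\kert V) < \infty$ by Lemma \ref{h0-vs-K}; Theorem \ref{nss} bounds $h^{j}$ by $\reg$ for finitely presented $\FI$-modules, so each of $\locoh{j}(V)$, $\locoh{j}(\shift{V}{})$, and $\locoh{j}(\deriv V)$ has finite degree; and shifting preserves finite degree.

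The main technical point is the naturality of $\locoh{j}(\shift{V}{}) \cong \shift{\!\locoh{j}(V)}{}$ and its compatibility with the connecting morphisms, which ultimately rests on the shift functor being exact and preserving injective $\FI$-modules; once this standard commutation is in hand, the rest of the argument is a routine diagram chase.
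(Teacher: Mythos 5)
Your argument matches the paper's proof essentially verbatim: both split the four-term sequence at $V/\kert V$, compute $\locoh{j}(\kert V)$, splice the two long exact sequences, and commute $\shift{}{}$ past $\locoh{j}$ using exactness of $\shift{}{}$ plus preservation of injectives (via its exact left adjoint). The only tiny wrinkle in part (2) is that Theorem~\ref{nss} carries a ``not $\cofi{0}$-acyclic'' hypothesis, so you should handle the $\cofi{0}$-acyclic case separately (where all $\locoh{j}$ vanish), whereas the paper simply cites \cite[Theorem 2.10]{cmnr-range} directly for $V$ and $\deriv V$, which covers both cases uniformly.
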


\begin{proof}
 For (1), note that $\kert V$ is certainly generated in degrees 
\begin{align*}
  \leq \deg(\kert V) = h^{0}(V) < \infty 
\end{align*}
by Lemma \ref{h0-vs-K}. Thus by \cite[Theorem A]{ramos-coh} (see \cite[Proposition 2.5]{bahran-reg}), $\kert V$ is presented in finite degrees. Therefore \cite[Theorem 2.10]{cmnr-range} applies to $\kert V$ and the complex $0 \rarr \kert V \rarr 0 \rarr 0 \rarr \cdots$, and hence 
\begin{align*}
 \locoh{j}(\kert V) = 
\begin{cases}
 \kert V & \text{if $j = 0$,}
 \\
 0 & \text{otherwise.}
\end{cases}
\end{align*}
Now applying $\locoh{0}$ to the short exact sequence 
\begin{align*}
 0 \rarr \kert V \rarr V \rarr V/\kert V \rarr 0 \, ,
\end{align*}
the associated long exact sequence gives a short exact sequence 
\begin{align*}
 0 \rarr \kert V \rarr \locoh{0}(V) \rarr \locoh{0}(V/\kert V) \rarr 0
\end{align*}
and isomorphisms 
\begin{align*}
 \locoh{j}(V) \cong \locoh{j}(V / \kert V)
\end{align*}
for every $j \geq 1$. Using these isomorphisms after applying $\locoh{0}$ to the short exact sequence 
\begin{align*}
 0 \rarr V / \kert V \rarr \shift{V}{} \rarr \deriv V \rarr 0 \, ,
\end{align*}
the associated long exact sequence will almost have the desired form, except we need to splice it in the beginning and interchange the order of the shift functor $\shift{}{}$ with local cohomology $\locoh{\star}$ in the middle column. 
To see $\shift{}{} \circ \locoh{\star} = \locoh{\star} \circ \shift{}{}$, first note that $\shift{}{} \colon \lMod{\FI} \rarr \lMod{\FI}$ 
\begin{itemize}
 \item is exact,
 \item has an exact left adjoint \cite[Theorem 4]{gan-negative-shift},
 \item satisfies $\shift{}{} \circ \locoh{0} = \locoh{0\,\,} \circ\, \shift{}{}$.
\end{itemize}
Consequently, given an $\FI$-module $U$ and an injective resolution $0 \rarr U \rarr \mathbf{I}^{\star}$, applying $\shift{}{}$ we get an injective resolution 
$0 \rarr \shift{U}{} \rarr \shift{\mathbf{I^{\star}}}{}$ of $\shift{U}{}$, and hence 
\begin{align*}
 \locoh{j}(\shift{U}{}) &= \co^{j}(\locoh{0}(\shift{\mathbf{I^{\star}}}{}))
 = \co^{j}(\shift{\!\locoh{0}(\mathbf{I^{\star}})}{})
 = \shift{\!\co^{j}(\locoh{0}(\mathbf{I^{\star}}))}{} = \shift{\!\locoh{j}(U)}{}
\end{align*}
for every $j \geq 0$, naturally in $U$.

For (2), assume $V$ is presented in finite degrees. Then $\deg(\kert V) = h^{0}(V) < \infty$  (so we have the long exact sequence from (1)) and $\deriv V$ is presented in finite degrees by Proposition \ref{h0-vs-K} and Proposition \ref{fp-deriv}. Now invoke \cite[Theorem 2.10]{cmnr-range} for $V$ and $\deriv V$.
\end{proof}

\begin{cor} \label{locoh-deriv} For every $\FI$-module $V$ presented in finite degrees, the following hold:
\begin{birki}
 \item For every $j \geq 0$, we have $h^{j}(\deriv V) 
 \leq \max\!\left\{ h^{j}(V) - 1,\, h^{j+1}(V) \right\}$.
 \vspace{0.1cm}
\item For every $j \geq 1$, we have $h^{j}(V) \leq \max\{h^{j-1}(\deriv V),h^{j}(\deriv V)\}$.
\end{birki}
\end{cor}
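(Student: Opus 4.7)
The plan is to deduce both inequalities directly from the long exact sequence of Proposition \ref{locoh-LES}(2), which applies because $V$ is presented in finite degrees. Writing $A_{j} := \locoh{j}(V)$ and $B_{j} := \locoh{j}(\deriv V)$, the relevant window reads
\begin{align*}
\cdots \rarr B_{j-1} \xrightarrow{\delta} A_{j} \xrightarrow{\psi} \shift{A_{j}}{} \xrightarrow{\tau} B_{j} \xrightarrow{\delta'} A_{j+1} \rarr \cdots
\end{align*}
and every module appearing in it has finite degree by Proposition \ref{locoh-LES}(2). The only tools needed are: (a) $\deg(Y) = \max\{\deg(X), \deg(Z)\}$ for any short exact sequence $0 \rarr X \rarr Y \rarr Z \rarr 0$; (b) the image of a morphism has degree bounded above by both source and target; (c) $\deg(\shift{W}{}) \leq \deg(W) - 1$ for every nonzero $\FI$-module $W$, immediate from $(\shift{W}{})_{n} = W_{n+1}$.

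For part (1), exactness at $B_{j}$ decomposes it as $0 \rarr \operatorname{im}(\tau) \rarr B_{j} \rarr \operatorname{im}(\delta') \rarr 0$. By (b) and (c), $\deg(\operatorname{im}(\tau)) \leq \deg(\shift{A_{j}}{}) \leq h^{j}(V) - 1$, while by (b), $\deg(\operatorname{im}(\delta')) \leq h^{j+1}(V)$. Applying (a) yields the claimed bound.

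For part (2), exactness at $A_{j}$ decomposes it as $0 \rarr \operatorname{im}(\delta) \rarr A_{j} \rarr \operatorname{im}(\psi) \rarr 0$. By (b), $\deg(\operatorname{im}(\delta)) \leq \deg(B_{j-1}) = h^{j-1}(\deriv V)$, and since $\operatorname{im}(\psi) \hookrightarrow \shift{A_{j}}{}$, (c) yields $\deg(\operatorname{im}(\psi)) \leq h^{j}(V) - 1$. Combining via (a) gives $h^{j}(V) \leq \max\{h^{j-1}(\deriv V), h^{j}(V) - 1\}$, which when $h^{j}(V) \geq 0$ forces $h^{j}(V) \leq h^{j-1}(\deriv V)$ and \emph{a fortiori} the stated inequality; the case $h^{j}(V) = -1$ is vacuous.

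The argument is essentially routine diagram chasing once the long exact sequence is in hand, so no substantive obstacle is expected. The only delicate point is the edge case where $A_{j} = 0$, in which (c) ceases to hold but the desired inequality is vacuous anyway.
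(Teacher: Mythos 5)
Your proof is correct, and it splits naturally by part. For part (1) you are doing exactly what the paper does: read off the short exact sequence $0 \rarr \operatorname{im}(\tau) \rarr \locoh{j}(\deriv V) \rarr \operatorname{im}(\delta') \rarr 0$ from the long exact sequence of Proposition~\ref{locoh-LES} and bound the two ends by $\deg\shift{\!\locoh{j}(V)}{}$ and $h^{j+1}(V)$ respectively. For part (2), however, your route is genuinely different from the paper's. The paper sets $N := \max\{h^{j-1}(\deriv V),\,h^{j}(\deriv V)\}$, observes that vanishing of both flanking terms above degree $N$ makes $\locoh{j}(V)_{n} \rarr \locoh{j}(V)_{n+1}$ an isomorphism for $n > N$, and then invokes finite degree to conclude these are all zero. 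You instead decompose $\locoh{j}(V)$ along $\ker\psi$ and exploit the elementary fact that $\operatorname{im}(\psi) \subseteq \shift{\!\locoh{j}(V)}{}$ has degree strictly less than $\deg\locoh{j}(V)$, so the top-degree piece must be carried by $\operatorname{im}(\delta)$, a quotient of $\locoh{j-1}(\deriv V)$. This uses only the \emph{incoming} term $\locoh{j-1}(\deriv V)$ rather than both neighbors, and in fact delivers the strictly sharper inequality $h^{j}(V) \leq h^{j-1}(\deriv V)$ for $j \geq 1$ (as you yourself point out), of which the stated bound is a weakening. Your handling of the edge case $\locoh{j}(V)=0$ is correct. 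Both arguments are short formal consequences of the same long exact sequence; yours has the advantage of identifying the optimal form of the inequality, while the paper's ``stabilize-then-vanish'' phrasing is a pattern it reuses elsewhere.
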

\begin{proof}
 By Proposition \ref{locoh-LES}, for every $j \geq 0$ we have 
\begin{align*}
 h^{j}(\deriv V) &= \deg \locoh{j}(\deriv V) 
 \\
 &\leq 
 \max\!\left\{
 \deg \shift{\!\locoh{j}(V)}{},\,
 \deg\locoh{j+1}(V)
 \right\} =
 \max\!\left\{
 \deg \shift{\!\locoh{j}(V)}{},\,
 h^{j+1}(V)
 \right\} \, .
\end{align*}
If $\locoh{j}(V) \neq 0$, then 
\begin{align*}
 \deg \shift{\!\locoh{j}(V)}{} = \deg \locoh{j}(V) - 1 = h^{j}(V) - 1
\end{align*}
 and (1) follows. If $\locoh{j}(V) = 0$, then $\locoh{j}(\deriv V)$ embeds in $\locoh{j+1}(V)$ and (1) again follows.

To prove (2), fix $j \geq 1$ and set $N := \max\{h^{j-1}(\deriv V), h^{j}(\deriv V)\}$ so by Proposition \ref{locoh-LES}, for every $n > N$ we have an isomorphism 
\begin{align*}
 \locoh{j}(V)_{n} \cong \shift{\!\locoh{j}(V)}{}_{n} = \locoh{j}(V)_{n+1} \, .
\end{align*}
But $\locoh{j}(V)$ has finite degree, therefore the above isomorphisms in the entire range $n > N$ have to be between zero modules, so that $h^{j}(V) = \deg \locoh{j}(V) \leq N$.
\end{proof}

\subsection{Critical index and the regularity of derivative}
In this section, we introduce the notion of critical index for an $\FI$-module and use it to study how regularity interacts with the derivative functor.
\begin{defn} \label{defn:crit}
 For an $\FI$-module $V$ presented in finite degrees which is not $\cofi{0}$-acyclic, we define its \textbf{critical index} as
\begin{align*}
 \crit(V) := 
 \min\!\left\{
 j : h^{j}(V) \geq 0 \text{ and }
 h^{j}(V) + j = \reg(V)
 \right\}
\end{align*}
\end{defn}

\begin{rem}
 Let $V$ be as in Definition \ref{defn:crit}, and set $\gamma:= \crit(V)$, $\rho := \reg(V)$. The following will not be needed in our arguments but we note them for context.
\begin{birki}
 \item   By Theorem \ref{nss} and \cite[Theorem 2.10]{cmnr-range}, the set of indices
\begin{align*}
 \left\{
 j : h^{j}(V) \geq 0 \text{ and }
 h^{j}(V) + j = \rho
 \right\}
\end{align*}
is a nonempty subset of $\{0,\dots,\weak(V)+1\}$; thus $0 \leq \gamma \leq \weak(V) + 1$. 
\vspace{0.1cm}
\item Although they do not give it a name, Nagpal--Sam--Snowden use the critical index: their invariant $\nu$  \cite[Definition 3.3]{nss-regularity} satisfies 
\begin{align*}
 \nu\!\left(
 \cofi{i}(V)_{i+\rho} 
 \right)
 = i + \gamma
\end{align*}
for $i \gg 0$ \cite[Proposition 4.3]{nss-regularity}.
\vspace{0.1cm}
\item It is possible that $h^{\gamma}(V) < \local(V)$. To see this, let us start by an exact sequence 
\begin{align*}
 0 \rarr Z \rarr A \rarr B \rarr W \rarr 0 \tag{$\lozenge$}
\end{align*}
of $\FI$-modules presented in finite degrees where $A,B$ are $\cofi{0}$-acyclic and $\deg (W) = 0$. Breaking this into two short exact sequences, the associated long exact sequences for $\locoh{*}$ yields
\begin{align*}
 \locoh{j}(Z) \cong \locoh{j-2}(W)
\end{align*}
for every $j \geq 0$. In particular $h^{2}(Z) = 0$ and $h^{j}(Z) = -1$ if $j \neq 2$. Now setting $V := Z \oplus T$ with $\deg(T) = 1$, we get 
\begin{align*}
 h^{j}(V) = 
\begin{cases}
 1 & \text{if $j=0$,}
 \\
 0 & \text{if $j=2$,}
 \\ 
 -1 & \text{otherwise.}
\end{cases}
\end{align*}
so that $\reg(V) = \crit(V) = 2$, $h^{2}(V) = 0$, but $\local(V) = 1$.

\end{birki}
\end{rem}
\begin{prop} \label{to-the-deriv}
 Let $V$ be an $\FI$-module presented in finite degrees which is not $\cofi{0}$-acyclic. Then the following hold:
\begin{birki}
 \item $\reg(\deriv V) \leq \reg(V) -1$.
 \vspace{0.1cm}
 \item If $\crit(V) \geq 1$, then $0 \leq \reg(\deriv V) = \reg(V) -1$ and $\crit(\deriv V) = \crit(V) -1$.
\end{birki}
\end{prop}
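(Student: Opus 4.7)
The plan is to use the long exact sequence inequalities in Corollary \ref{locoh-deriv} together with the Nagpal--Sam--Snowden characterization of regularity (Theorem \ref{nss}). First I note that $\deriv V$ is again presented in finite degrees by Proposition \ref{fp-deriv}, so Theorem \ref{nss} is available both for $V$ and for $\deriv V$ whenever the latter is not $\cofi{0}$-acyclic; if $\deriv V$ is $\cofi{0}$-acyclic then $\reg(\deriv V) = -2$ and (1) holds trivially because $\reg(V) \geq -1$.

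For part (1), assume $\deriv V$ is not $\cofi{0}$-acyclic and fix any $j \geq 0$ with $h^{j}(\deriv V) \geq 0$. By Corollary \ref{locoh-deriv}(1),
\[
h^{j}(\deriv V) + j \;\leq\; \max\!\bigl\{ h^{j}(V) + j - 1,\, h^{j+1}(V) + j \bigr\}.
\]
Since the left side is $\geq 0$, at least one of $h^{j}(V) - 1$ or $h^{j+1}(V)$ is $\geq 0$; in either case the corresponding term on the right is bounded by $\reg(V) - 1$ by Theorem \ref{nss}. Taking the max over $j$ and invoking Theorem \ref{nss} for $\deriv V$ gives $\reg(\deriv V) \leq \reg(V) - 1$.

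For part (2), set $\gamma := \crit(V) \geq 1$ and $\rho := \reg(V)$, so $h^{\gamma}(V) \geq 0$ and $h^{\gamma}(V) + \gamma = \rho$. The key step is to apply Corollary \ref{locoh-deriv}(2) at $j = \gamma$:
\[
h^{\gamma}(V) \;\leq\; \max\!\bigl\{ h^{\gamma-1}(\deriv V),\, h^{\gamma}(\deriv V) \bigr\}.
\]
Part (1) yields $h^{\gamma}(\deriv V) \leq \rho - 1 - \gamma = h^{\gamma}(V) - 1 < h^{\gamma}(V)$ and $h^{\gamma-1}(\deriv V) \leq \rho - \gamma = h^{\gamma}(V)$, so the max must be attained by $h^{\gamma-1}(\deriv V)$ and in fact equal to $h^{\gamma}(V)$. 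This exhibits $h^{\gamma-1}(\deriv V) + (\gamma-1) = \rho - 1$ with $h^{\gamma-1}(\deriv V) \geq 0$; in particular $\deriv V$ is not $\cofi{0}$-acyclic, Theorem \ref{nss} applies, and combined with part (1) we get $\reg(\deriv V) = \rho - 1 \geq 0$.

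The remaining claim $\crit(\deriv V) = \gamma - 1$ follows by contradiction: if some $j \leq \gamma - 2$ satisfied $h^{j}(\deriv V) \geq 0$ and $h^{j}(\deriv V) + j = \rho - 1$, then Corollary \ref{locoh-deriv}(1) at this $j$ gives either $h^{j}(V) + j = \rho$ with $h^{j}(V) \geq 0$, or $h^{j+1}(V) + (j+1) = \rho$ with $h^{j+1}(V) \geq 0$, both of which violate the minimality of $\gamma$ since $j < \gamma$ and $j+1 \leq \gamma - 1 < \gamma$. The main obstacle is bookkeeping the sign conventions of the $h^{j}$'s (distinguishing $-1$ from $\geq 0$) so that Theorem \ref{nss} is legitimately applied; once that is in place, the argument is a clean matching of upper and lower bounds coming from the two parts of Corollary \ref{locoh-deriv}.
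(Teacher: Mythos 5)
Your argument is correct, and part (1) follows the same path as the paper (Corollary \ref{locoh-deriv}(1) plus Theorem \ref{nss} on both sides, with the case $\deriv V$ being $\cofi{0}$-acyclic handled separately). Part (2), however, takes a genuinely different route. The paper goes back to the exact sequence of Proposition \ref{locoh-LES} and evaluates it at a finite set of size $\rho - \gamma$, reading off a nonzero term $\locoh{\gamma}(V)_{\rho-\gamma}$ sandwiched between vanishing ones, and then separately evaluates at size $\rho - 1 - j$ for each $j < \gamma - 1$ to pin down the critical index. You instead stay at the level of the numerical inequalities: you apply Corollary \ref{locoh-deriv}(2) at $j = \gamma$ to get a lower bound on $\max\{h^{\gamma-1}(\deriv V), h^{\gamma}(\deriv V)\}$, and then use the upper bounds from part (1) (via Theorem \ref{nss}) to rule out $h^{\gamma}(\deriv V)$ achieving the max, squeezing out $h^{\gamma-1}(\deriv V) = \rho - \gamma$ exactly; minimality of $\gamma$ is deduced by contradiction through Corollary \ref{locoh-deriv}(1) and Theorem \ref{nss} again. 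This is a clean piece of bookkeeping that avoids returning to the exact sequence, at the cost of leaning on both parts of Corollary \ref{locoh-deriv} together with Theorem \ref{nss} three separate times. It highlights that the proposition is in fact a purely arithmetic consequence of the inequalities in Corollary \ref{locoh-deriv} plus the regularity formula, which is a worthwhile observation; the paper's direct manipulation of the sequence gives somewhat more geometric insight into where the nonvanishing comes from. One small spot where you are terse: when you write that the left side being $\geq 0$ forces one of $h^{j}(V)-1$ or $h^{j+1}(V)$ to be $\geq 0$, the reader should note that $h^{j}(\deriv V) \geq 0$ (not merely $h^{j}(\deriv V) + j \geq 0$) is what is being used, since it is $h^{j}(\deriv V)$ that is bounded by the max.
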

\begin{proof}
 We note $\deriv V$ is presented in finite degrees by Proposition \ref{fp-deriv}, and set $\rho := \reg(V)$, $\gamma := \crit(V)$. 
 
Assume $\deriv V$ is $\cofi{0}$-acyclic. Then by \cite[Theorem 2.4]{bahran-reg} $h^{j}(\deriv V) = -1$ for every $j \geq 0$, and hence by part (2) of Corollary \ref{locoh-deriv} we have $h^{j}(V) = -1$ for every $j \geq 1$, forcing $\gamma = 0$. Therefore the condition $\reg(\deriv V) < 0$ (which is equivalent to $\deriv V$ being $\cofi{0}$-acyclic by \cite[Corollary 2.9]{bahran-reg}) implies $\gamma = 0$. In this case, we further have 
\begin{align*}
 \reg(\deriv V) < 0 \leq h^{0}(V) = \rho
\end{align*}
by \cite[Theorem 2.4]{bahran-reg} and (1) follows.
 
Next, assume $\deriv V$ is not $\cofi{0}$-acyclic (hence neither is $V$, see the discussion in \cite[Section 2.3]{cmnr-range}). Let us write
\begin{align*}
 J(V) &:= \{j \geq 0: h^{j}(V) \geq 0\} \, .
\end{align*}
Let $j \in J(\deriv V)$. By part (1) of Corollary \ref{locoh-deriv}, we have either $0\leq h^{j}(\deriv V) \leq h^{j}(V) - 1$ or $0 \leq h^{j}(\deriv V) \leq h^{j+1}(V)$. In the former case, we have $j \in J(V)$ and 
\begin{align*}
 h^{j}(\deriv V) + j \leq h^{j}(V) + j - 1 \leq \rho - 1
\end{align*}
by Theorem \ref{nss}, and in the latter case, we have $j+1 \in J(V)$
and
\begin{align*}
 h^{j}(\deriv V) + j \leq h^{j+1}(V) + j + 1 - 1  \leq \rho - 1
\end{align*}
by Theorem \ref{nss}. Yet another application of Theorem \ref{nss} now yields $\reg(\deriv V) \leq \rho - 1$, which is exactly (1). To prove (2), we further assume that $\gamma \geq 1$. We claim that
\begin{align*}
 h^{\gamma-1}(\deriv V) + \gamma - 1 = \rho - 1 \, .
\end{align*}
To that end, by Proposition \ref{locoh-LES} we have an exact sequence
\begin{align*}
  \shift{\!\locoh{\gamma-1}(V)}{} \rarr \locoh{\gamma-1}(\deriv V) \rarr  \locoh{\gamma}(V) \rarr \shift{\!\locoh{\gamma}(V)}{}
\end{align*}
which we evaluate at a finite set of size $\rho-\gamma$ to get an exact sequence
\begin{align*}
  \locoh{\gamma-1}(V)_{\rho - \gamma + 1} 
  \rarr \locoh{\gamma-1}(\deriv V)_{\rho-\gamma} 
  \rarr  \locoh{\gamma}(V)_{\rho-\gamma}
  \rarr \locoh{\gamma}(V)_{\rho-\gamma+1}
\end{align*}
of $\sym{\rho-\gamma}$-modules. Here 
\begin{itemize}
 \item $\locoh{\gamma-1}(V)_{\rho - \gamma + 1} = 0$, because by the definition of critical index we have 
\begin{align*}
 h^{\gamma-1}(V) + \gamma - 1 &< \rho \, ,
 \\
 \deg(\locoh{\gamma-1}(V)) &< \rho - \gamma + 1 \, .
\end{align*}
\item $\locoh{\gamma}(V)_{\rho-\gamma} \neq 0$ and $\locoh{\gamma}(V)_{\rho-\gamma+1} = 0$, because by the definition of critical index we have 
\begin{align*}
 h^{\gamma}(V) \geq 0 \text{ and } h^{\gamma}(V) + \gamma &= \rho \, ,
 \\
 \deg(\locoh{\gamma}(V)) &= \rho - \gamma \, .
\end{align*}
\end{itemize}
Therefore we conclude that 
\begin{align*}
 \locoh{\gamma-1}(\deriv V)_{\rho-\gamma} &\neq 0 \, ,
 \\
 h^{\gamma-1}(\deriv V) \geq 0 &\text{ and }
 h^{\gamma-1}(\deriv V) + \gamma - 1 \geq \rho - 1\, .
\end{align*}
On the other hand, part (1) and Theorem \ref{nss} give the reverse inequality to the above, establishing our claim, the equation $\reg(\deriv V) = \rho - 1$, and the inequality $\crit(\deriv V) \leq \gamma - 1$. To see in fact $\crit(\deriv V) = \gamma - 1$, we can take $0 \leq j < \gamma - 1$ and evaluate the exact sequence
\begin{align*}
  \shift{\!\locoh{j}(V)}{} \rarr \locoh{j}(\deriv V) \rarr  \locoh{j+1}(V) 
\end{align*}
at a finite set of size $\rho - 1 - j$ to get
\begin{align*}
  0 = \locoh{j}(V)_{\rho - j} \rarr \locoh{j}(\deriv V)_{\rho - 1 - j} \rarr  \locoh{j+1}(V)_{\rho - (j + 1)} = 0
\end{align*}
and conclude $h^{j}(\deriv V) + j < \rho - 1$, as desired.
\end{proof}

\subsection{Identifying the polynomial conditions}
In this section we prove Theorem \ref{thm:hmax} and Theorem \ref{thm:reg}. Because $\FI$-modules being presented in finite degrees is such a common assumption, we first incorporate it as a redundant hypothesis in Theorem \ref{thm:hmax-fp} and Theorem \ref{thm:reg-fp}, and then remove this redundancy using Theorem \ref{finite-manifest}. 

\begin{thm} \label{finite-manifest}
 For an $\FI$-module $V$ with $\weak(V) < \infty$, the following are equivalent:
\begin{birki}
 \item $\reg(V) < \infty$.
 \item $\local(V) < \infty$.
 \item $V$ is presented in finite degrees. 
\end{birki}
\end{thm}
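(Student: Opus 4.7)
The plan is to prove the cycle $(3) \Rightarrow (1)$, $(3) \Rightarrow (2)$, $(2) \Rightarrow (3)$, $(1) \Rightarrow (3)$. The first two are essentially formal consequences of what has already been set up: when $V$ is presented in finite degrees, part~(2) of Proposition~\ref{locoh-LES} makes every $h^{j}(V)$ finite, while the remark following Theorem~\ref{nss} confines the set $\{j : \locoh{j}(V) \neq 0\}$ to $\{0, \ldots, \weak(V)+1\}$, which is finite because $\weak(V) < \infty$. Hence $\local(V) = \max_{j} h^{j}(V)$ is a maximum of finitely many finite values, and Theorem~\ref{nss} turns $\reg(V) = \max\{h^{j}(V) + j : h^{j}(V) \geq 0\}$ into a similar maximum (the $\cofi{0}$-acyclic case being trivial), so both (1) and (2) follow.

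For $(2) \Rightarrow (3)$, I induct on $r := \weak(V)$. In the base case $r = -1$, torsion gives $\deg(V) = h^{0}(V) \leq \local(V) < \infty$, and Corollary~\ref{finite-support} takes us the rest of the way. In the inductive step, the inequality $h^{0}(V) \leq \local(V) < \infty$ licenses Proposition~\ref{locoh-LES}(1); its long exact sequence sandwiches each $\locoh{j}(\deriv V)$ between $\shift{\locoh{j}(V)}{}$ and $\locoh{j+1}(V)$, giving $\local(\deriv V) \leq \local(V) < \infty$. Since $\weak(\deriv V) = r - 1$, the inductive hypothesis yields that $\deriv V$ is presented in finite degrees, and Proposition~\ref{fp-deriv} (applied with $h^{0}(V) < \infty$) then gives~(3).

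The delicate implication is $(1) \Rightarrow (3)$: a parallel induction would invoke Proposition~\ref{to-the-deriv}(1), which presupposes the very conclusion we seek. My plan is to bootstrap through~(2) by proving $(1) \Rightarrow (2)$ and combining with the preceding paragraph. The heart of this bootstrap is the torsion base case, namely, that $\reg(V) < \infty$ implies $\deg(V) < \infty$ for torsion $V$. Write $V$ as the filtered colimit of its finitely generated torsion submodules $V_{\alpha}$; each is presented in finite degrees by Corollary~\ref{finite-support}, with $\reg(V_{\alpha}) = \deg(V_{\alpha})$ by Theorem~\ref{nss}. Since $\cofi{i}$ commutes with filtered colimits (being a left derived functor of a cocontinuous functor), a persistence argument should establish $\reg(V) \geq \sup_{\alpha} \deg(V_{\alpha}) = \deg(V)$, so $\reg(V) < \infty$ forces $\deg(V) < \infty$. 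The inductive step at $\weak(V) = r \geq 0$ then mirrors the one for $(2) \Rightarrow (3)$: one first extracts $h^{0}(V) < \infty$ by applying the torsion base case to $\locoh{0}(V)$ (whose regularity is controlled by the long exact sequence in $\cofi{*}$ attached to $0 \to \locoh{0}(V) \to V \to V/\locoh{0}(V) \to 0$), and then uses Proposition~\ref{locoh-LES}(1) to propagate finiteness to the higher $h^{j}(V)$.

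I expect the main obstacle to be the persistence step in the filtered colimit: the system maps $\cofi{i}(V_{\alpha}) \to \cofi{i}(V_{\beta})$ need not be injective, so a nonzero class in $\cofi{i}(V_{\alpha})$ of degree $\deg(V_{\alpha}) + i$ can vanish before it reaches $\cofi{i}(V)$. A cleaner alternative that sidesteps the colimit analysis is the direct structural estimate $t_{1}(V) \geq \deg(V)$ for torsion $V$, proved by exhibiting the ``death degree'' of each minimal generator as a nontrivial class in $\cofi{1}(V)$; combined with $\reg(V) < \infty \Rightarrow t_{1}(V) < \infty$, this instantly closes the torsion base case and completes the argument.
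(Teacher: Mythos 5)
Your treatments of $(3) \Rightarrow (1)$, $(3) \Rightarrow (2)$, and $(2) \Rightarrow (3)$ are correct, and the last one matches the paper's inductive argument. The problem is $(1) \Rightarrow (3)$. You correctly diagnose why a naive induction fails (Proposition~\ref{to-the-deriv}(1) presupposes finite presentation), but the workaround you sketch — an induction on $\weak(V)$ to prove $(1) \Rightarrow (2)$ — has the same circularity built in, and the paper takes a non-inductive route that avoids it entirely.

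Two concrete gaps. First, the torsion base case is never established: you flag the non-injectivity obstruction in the filtered-colimit persistence argument yourself, and the proposed alternative estimate $t_1(V) \geq \deg(V)$ for torsion $V$ is merely asserted; it is plausible but is itself a nontrivial claim that would need a proof. Second, and more fundamentally, the inductive step is circular. To apply the inductive hypothesis to $\deriv V$ you need $\reg(\deriv V) < \infty$, but the only available bound $\reg(\deriv V) \leq \reg(V) - 1$ is Proposition~\ref{to-the-deriv}(1), which is exactly what you cannot use because it requires $V$ to be presented in finite degrees. Likewise, the claim that $\reg(\locoh{0}(V)) < \infty$ follows from the long exact sequence in $\cofi{*}$ for $0 \to \locoh{0}(V) \to V \to V/\locoh{0}(V) \to 0$ requires control on $\cofi{*}(V/\locoh{0}(V))$, hence on $t_0(V)$, and $\reg(V) < \infty$ says nothing about $t_0(V)$ by definition.

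The paper's $(1) \Rightarrow (2)$ sidesteps the whole issue with one move. Set $W := \cofi{0}(V)$ and form the short exact sequence $0 \to K \to \induce(W) \to V \to 0$. Since $\induce(W)$ is $\cofi{0}$-acyclic with $\cofi{0}(\induce(W)) \to \cofi{0}(V)$ the identity, the $\cofi{*}$-long exact sequence collapses to isomorphisms $\cofi{i}(K) \cong \cofi{i+1}(V)$, so $t_i(K) = t_{i+1}(V)$ for all $i \geq 0$. In particular $t_0(K) = t_1(V)$ and $t_1(K) = t_2(V)$ are finite from $\reg(V) < \infty$ alone, with no input from $t_0(V)$; hence $K$ is presented in finite degrees, so $\local(K) < \infty$, and since $\locoh{*}(\induce(W)) = 0$ the local cohomology long exact sequence gives $\local(V) = \local(K)$. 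That shift by one in homological degree is precisely what neutralises the unknown $t_0(V)$, and it is not something an induction on $\weak(V)$ can reproduce.
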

\begin{proof}
(3) $\imp$ (1): Immediate from \cite[Theorem A]{ce-homology}. 

(1) $\imp$ (2): Writing 
\begin{itemize}
 \item $\FB$ for the category of finite sets and bijections,
 \vspace{0.1cm}
 \item $\induce$ for the left adjoint of the restriction functor $\Res_{\FB}^{\FI} \colon \lMod{\FI} \rarr \lMod{\FB}$,
 \vspace{0.1cm}
 \item  $W := \cofi{0}(V)$,
\end{itemize}
there is a short exact sequence 
\begin{align*}
 0 \rarr K \rarr \induce(W) \rarr V \rarr 0 \tag{$\dagger$}
\end{align*}
for some $\FI$-module $K$. Here $\induce(W)$ is $\cofi{0}$-acyclic \cite[Lemma 2.3]{ce-homology}. Moreover, the $\cofi{0}$-image of the epimorphism in $(\dagger)$ is the identity map $\cofi{0}(V) \rarr \cofi{0}(V)$. Thus applying $\cofi{0}$ to $(\dagger)$, the associated long exact sequence splits into isomorphisms
\begin{align*}
 \cofi{i+1}(V) &\cong \cofi{i}(K)
\end{align*}
for every $i \geq 0$. In particular we have
\begin{align*}
  t_{i}(K) = t_{i+1}(V) < \reg(V) + i + 1 < \infty
\end{align*}
for every $i \geq 0$, so $K$ is presented in finite degrees. Consequently $\local(K) < \infty$ by \cite[Proposition 2.9, part (4)]{cmnr-range} and \cite[Theorem 2.10]{cmnr-range}. We will be done once we show 
\begin{align*}
 \locoh{*}\!\left( \induce(W) \right) = 0 \, ,
\end{align*}
because applying $\locoh{0}$ to $(\dagger)$, the long exact sequence yields $\local(V) = \local(K)$. The last claim follows from $W$ being the direct product of $\FB$-modules each supported in a single degree, and the functors $\induce$, $\locoh{*}$ commuting with direct products  (for instance, the former via the description \cite[Definition 2.2.2]{cef} and the latter via the description \cite[Definition 5.4]{li-ramos}) together with \cite[Theorem 2.4]{bahran-reg}.

(2) $\imp$ (3): we employ induction on $\weak(V)$: if $\weak(V)=-1$, then $V = \locoh{0}(V)$ is torsion, and so 
\begin{align*}
 \deg(V) = h^{0}(V) \leq \local(V) < \infty \, ,
\end{align*}
thus $V$ is presented in finite degrees by Corollary \ref{finite-support}. 
Next, assume $\weak(V) \geq 0$. We can apply Proposition \ref{locoh-LES} to $V$ to conclude $\local(\deriv V) < \infty$. We also have $\weak(\deriv V) \leq \weak(V)-1$, therefore $\deriv V$ is presented in finite degrees by the induction hypothesis. We conclude by applying Proposition \ref{fp-deriv}.
\end{proof}

\begin{thm}\label{thm:hmax-fp}
 For every pair of integers $r \geq -1$, $L \geq 0$, we have 
\begin{align*}
 \poly_{1}(r,L) = \left\{
 V \in \lMod{\FI} : \begin{tabular}{l}
 $V$ is presented in finite degrees,
 \\
 $\weak(V) \leq r$, and $\local(V) \leq L-1$
\end{tabular}
 \right\} \, .
\end{align*}
\end{thm}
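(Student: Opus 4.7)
The plan is to induct on $r \geq -1$, aligning the recursion in Definition \ref{defn:poly1} with the behaviour of $\weak$ and $\local$ under the derivative functor $\deriv$.

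For the base case $r = -1$, the class $\poly_{1}(-1,L)$ consists of $\FI$-modules with $\deg(V) \leq L-1$. Any such $V$ is torsion, so $\weak(V) = -1$; and when $V \neq 0$, Corollary \ref{finite-support} ensures both that $V$ is presented in finite degrees and that $\local(V) = h^{0}(V) = \deg(V)$. Conversely, if $V$ is presented in finite degrees with $\weak(V) \leq -1$, then $V = \locoh{0}(V)$ is torsion, whence $\deg(V) = h^{0}(V) \leq \local(V) \leq L-1$.

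For the inductive step $r \geq 0$, membership in $\poly_{1}(r,L)$ unwinds to $h^{0}(V) \leq L-1$ together with $\deriv V \in \poly_{1}(r-1,L)$; by the inductive hypothesis the latter condition is equivalent to $\deriv V$ being presented in finite degrees with $\weak(\deriv V) \leq r-1$ and $\local(\deriv V) \leq L-1$. In the forward direction, assuming $V$ is presented in finite degrees with $\weak(V) \leq r$ and $\local(V) \leq L-1$: the bound $h^{0}(V) \leq \local(V) \leq L-1$ is automatic; Proposition \ref{fp-deriv} gives that $\deriv V$ is presented in finite degrees; the identity $\deriv^{r+1}V = \deriv^{r}(\deriv V)$ yields $\weak(\deriv V) \leq r-1$; and Corollary \ref{locoh-deriv}(1) delivers $h^{j}(\deriv V) \leq \max\{h^{j}(V)-1,\, h^{j+1}(V)\} \leq L-1$ for every $j \geq 0$. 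In the converse direction, the combination of $h^{0}(V) \leq L-1 < \infty$ and $\deriv V$ being presented in finite degrees feeds Proposition \ref{fp-deriv} to conclude $V$ is presented in finite degrees; the inequality $\weak(V) \leq \weak(\deriv V)+1 \leq r$ is immediate from the definition of $\weak$ applied to $\deriv^{r+1}V = \deriv^{r}(\deriv V)$; the hypothesis handles $h^{0}(V)$; and Corollary \ref{locoh-deriv}(2) gives $h^{j}(V) \leq \max\{h^{j-1}(\deriv V),\, h^{j}(\deriv V)\} \leq L-1$ for $j \geq 1$.

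The main obstacle --- really a bookkeeping concern --- is ensuring that Corollary \ref{locoh-deriv} is applied only to modules that are presented in finite degrees, since both of its parts carry that hypothesis. This is supplied in the forward direction by the assumption on $V$ itself, and in the converse direction by first upgrading ``$\deriv V$ presented in finite degrees and $h^{0}(V) < \infty$'' to ``$V$ presented in finite degrees'' via Proposition \ref{fp-deriv} before invoking Corollary \ref{locoh-deriv}(2).
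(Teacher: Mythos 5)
Your proposal is correct and follows essentially the same route as the paper: induct on $r$ with $L$ fixed, handle the base case via Corollary \ref{finite-support}, and in the inductive step use Proposition \ref{fp-deriv} to transfer finite presentation across $\deriv$ together with the two parts of Corollary \ref{locoh-deriv} to transfer the bound on local degree in each direction. The only difference is cosmetic --- you have swapped which inclusion is called ``forward'' --- and you correctly note that finite presentation of $V$ must be established (via Proposition \ref{fp-deriv}) before Corollary \ref{locoh-deriv}(2) can be invoked, which is exactly the order of operations in the paper's argument.
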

\begin{proof}
 We fix $L \geq 0$ and employ induction on $r$. For the base case $r=-1$, we first let $V \in \poly_{1}(-1,L)$, that is, $\deg(V) \leq L-1$. Then $V$ is torsion so $\weak(V) = -1$, and by Corollary \ref{finite-support} $V$ is presented in finite degrees with $\local(V) \leq L-1$. Conversely, suppose $V$ is presented in finite degrees, $\weak(V) \leq -1$, and $\local(V) \leq L-1$. Then $V$ is torsion, so $\locoh{0}(V) = V$ has degree $\leq L-1$.

For the inductive step, fix $r \geq 0$ and assume that we have
\begin{align*}
 \poly_{1}(r-1,L) = \left\{
 U \in \lMod{\FI} : \begin{tabular}{l}
 $U$ is presented in finite degrees,
 \\
 $\weak(U) \leq r-1$, and $\local(U) \leq L-1$
\end{tabular}
 \right\} \, .
\end{align*} 
Next, let $V \in \poly_{1}(r,L)$, so by Definition \ref{defn:poly1}, $h^{0}(V) \leq L-1$ and 
\begin{align*}
 \deriv V \in \poly_{1}(r-1,L) \, .
\end{align*}
 By the induction hypothesis, we conclude the following:
\begin{itemize}
 \item $\deriv V$ is presented in finite degrees: it follows that $V$ is presented in finite degrees by Proposition \ref{fp-deriv}.
 \item $\weak(\deriv V) \leq r-1$: this means $\deriv^{r}\deriv V = \deriv^{r+1} V$ is torsion, so $\weak(V) \leq r$.
 \item $\local(\deriv V) \leq L-1$: by part (2) of Corollary \ref{locoh-deriv}, we have $\local(V) \leq L-1$.
\end{itemize}
Conversely, let $V$ be an $\FI$-module which is presented in finite degrees, $\weak(V) \leq r$, and $\local(V) \leq L-1$. We observe the following:
\begin{itemize}
 \item $\deriv^{r+1}V = \deriv^{r} \deriv V$ is torsion, so $\weak(\deriv V) \leq r-1$.
 \item By Proposition \ref{fp-deriv}, $\deriv V$ is presented in finite degrees.
 \item $\local(\deriv V) \leq L-1$ by part (1) of Corollary \ref{locoh-deriv}.
\end{itemize}
Therefore by the induction hypothesis, we get $\deriv V \in \poly_{1}(r-1,L)$ and hence $V \in \poly_{1}(r,L)$ by Definition \ref{defn:poly1}.
\end{proof}

\begin{proof}[Proof of \emph{\textbf{Theorem \ref{thm:hmax}}}] Immediate from Theorem \ref{thm:hmax-fp} and Theorem \ref{finite-manifest}.
\end{proof}

\begin{thm}\label{thm:reg-fp}
 For every pair of integers $r \geq -1$, $M \geq 0$, we have 
\begin{align*}
 \poly_{2}(r,M) = \left\{ V \in \lMod{\FI} :
 \begin{tabular}{l}
 $V$ is presented in finite degrees,
 \\
 $\weak(V) \leq r$, and $\reg(V) \leq M-1$
\end{tabular}
 \right\} \, .
\end{align*}
\end{thm}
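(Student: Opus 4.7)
My plan is to mirror the inductive structure used for Theorem \ref{thm:hmax-fp}: induct on $r \geq -1$, keeping $M$ free. The base case $r = -1$ proceeds essentially identically, as $V \in \poly_{2}(-1, M)$ means $\deg(V) \leq M-1$, which by Corollary \ref{finite-support} is equivalent (under $\weak(V) \leq -1$) to $V$ being presented in finite degrees with $\reg(V) = \deg(V) \leq M-1$. For the inductive step at $r \geq 0$, I apply the inductive hypothesis to $\deriv V$ with parameters $(r-1,\, \max\{0, M-1\})$; the ``transfer'' conditions on presentation in finite degrees and stable degree are handled exactly as in the proof of Theorem \ref{thm:hmax-fp}, using Proposition \ref{fp-deriv} and the identity $\deriv^{r+1}V = \deriv^{r}\!\deriv V$.

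For the forward inclusion $\poly_{2}(r,M) \subseteq \text{RHS}$, the task is to upgrade $h^{0}(V) \leq M-1$ and $\reg(\deriv V) \leq \max\{0,M-1\} - 1$ into the single bound $\reg(V) \leq M-1$. I plan to combine Corollary \ref{locoh-deriv}(2) with Theorem \ref{nss}: for each $j \geq 1$ with $h^{j}(V) \geq 0$, part (2) of Corollary \ref{locoh-deriv} guarantees that some $h^{*}(\deriv V) \geq 0$, so Theorem \ref{nss} applied to $\deriv V$ yields $h^{j}(V) + j \leq \reg(\deriv V) + 1 \leq M - 1$, and together with $h^{0}(V) \leq M-1$, Theorem \ref{nss} applied to $V$ delivers $\reg(V) \leq M-1$. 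For the reverse inclusion, Theorem \ref{nss} immediately gives $h^{0}(V) \leq \reg(V) \leq M-1$, and Proposition \ref{to-the-deriv}(1) gives $\reg(\deriv V) \leq \reg(V) - 1 \leq M - 2$, which majorizes $\max\{0, M-1\} - 1$ in both cases $M = 0$ and $M \geq 1$.

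The main obstacles lie in the edge cases where Theorem \ref{nss} and Proposition \ref{to-the-deriv} do not directly apply, namely where $V$ or $\deriv V$ is $\cofi{0}$-acyclic. For the forward direction with $M = 0$, $\reg(\deriv V) \leq -1$ forces $\deriv V$ to be $\cofi{0}$-acyclic; I would then invoke \cite[Theorem 2.4]{bahran-reg} to get $h^{j}(\deriv V) = -1$ for all $j$, cascade to $h^{j}(V) = -1$ for $j \geq 1$ via Corollary \ref{locoh-deriv}(2), combine with $h^{0}(V) \leq -1$, and conclude that $V$ is $\cofi{0}$-acyclic with $\reg(V) = -2 \leq -1$. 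For the reverse direction when $V$ itself is $\cofi{0}$-acyclic, I would symmetrically use \cite[Theorem 2.4]{bahran-reg} to get $h^{j}(V) = -1$ for all $j$ and then Corollary \ref{locoh-deriv}(1) to force $h^{j}(\deriv V) = -1$ for all $j$, concluding by \cite[Corollary 2.9]{bahran-reg} that $\deriv V$ is also $\cofi{0}$-acyclic with $\reg(\deriv V) = -2$, which sits below $\max\{0, M-1\} - 1$ for every $M \geq 0$. Once these edge cases are secured, the inductive hypothesis closes the argument.
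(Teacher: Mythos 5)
Your proof is correct and follows a genuinely different route in the forward inclusion of the inductive step. The paper introduces the critical index $\crit(V)$ and splits on whether $\crit(V)=0$ or $\crit(V) \geq 1$, invoking Proposition~\ref{to-the-deriv}(2) in the latter case; you bypass the critical index and get the needed bound directly from Corollary~\ref{locoh-deriv}(2) together with Theorem~\ref{nss}, giving $h^{j}(V) + j \leq \reg(\deriv V) + 1$ for each $j \geq 1$ with $h^{j}(V) \geq 0$, i.e.\ the one-sided inequality $\reg(V) \leq \max\{h^{0}(V),\, \reg(\deriv V) + 1\}$ which is all that is required. The paper pays for the sharper equality $\reg(\deriv V) = \reg(V) - 1$ under $\crit(V) \geq 1$ with the full exactness of Proposition~\ref{locoh-LES} to exhibit a nonvanishing class at the critical spot; your inequality is cheaper and adequate. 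Two small points to tighten: your main calculation implicitly assumes $\deriv V$ is non-$\cofi{0}$-acyclic, and while you treat the $M = 0$ edge case, the case $M \geq 1$ with $\deriv V$ $\cofi{0}$-acyclic but $V$ not should be recorded as well (it is automatic: then $h^{j}(\deriv V) = -1$ for all $j$ cascades to $h^{j}(V) = -1$ for $j \geq 1$ by Corollary~\ref{locoh-deriv}(2), so $\reg(V) = h^{0}(V) \leq M-1$). Also, in the base case's reverse inclusion one should first observe that a torsion $\FI$-module presented in finite degrees has finite degree before invoking Corollary~\ref{finite-support}, or simply use Theorem~\ref{nss} directly as the paper does. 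The reverse inclusion of the inductive step you handle identically to the paper, via Proposition~\ref{to-the-deriv}(1) together with the acyclic case.
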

\begin{proof}
 We fix $M \geq 0$ and employ induction on $r$. For the base case $r=-1$, we first let $V \in \poly_{2}(-1,M)$, that is, $\deg(V) \leq M-1$. Then $V$ is torsion so $\weak(V) = -1$, and by Corollary \ref{finite-support} $V$ is presented in finite degrees with $\reg(V) \leq M-1$. Conversely, suppose $V$ is presented in finite degrees, $\weak(V) \leq -1$, and $\reg(V) \leq M-1$. Then $V$ is torsion, so $\locoh{0}(V) = V$ has degree $\leq M-1$ by Theorem \ref{nss}.

For the inductive step, fix $r \geq 0$ and assume that for every $M' \geq 0$ we have
\begin{align*}
 \poly_{2}(r-1,M') = \left\{ U \in \lMod{\FI} :
 \begin{tabular}{l}
 $U$ is presented in finite degrees,
 \\
 $\weak(U) \leq r-1$, and $\reg(U) \leq M'-1$
\end{tabular}
 \right\} \, .
\end{align*}
Next, fix $M \geq 0$ and let $V \in \poly_{2}(r,M)$, so by Definition \ref{defn:poly2}, $h^{0}(V) \leq M-1$ and 
\begin{align*}
 \deriv V \in \poly_{2}(r-1,\,\max\{0,M-1\}) \, .
\end{align*}
By the induction hypothesis, we conclude the following: 
\begin{itemize}
 \item $\deriv V$ is presented in finite degrees: it follows that $V$ is presented in finite degrees by Proposition \ref{fp-deriv}.
 \item $\weak(\deriv V) \leq r-1$: this means $\deriv^{r}\deriv V = \deriv^{r+1} V$ is torsion, so $\weak(V) \leq r$.
 \item $\reg(\deriv V) \leq \max\{-1,M-2\}$.
\end{itemize}
Three possibilities arise: 
\begin{birki}
 \item $V$ is $\cofi{0}$-acyclic. Then $\reg(V) = -2 \leq M-1$.
\item $V$ is not $\cofi{0}$-acyclic and $\crit(V) = 0$. Here the definition of critical index immediately yields 
\begin{align*}
 \reg(V) = h^{0}(V) \leq M - 1 \, .
\end{align*}
\item $V$ is not $\cofi{0}$-acyclic and $\crit(V) \geq 1$. Part (2) of Proposition \ref{to-the-deriv} yields 
\begin{align*}
 1 \leq \reg(V) = \reg(\deriv V) + 1 \leq \max\{0, M - 1\} \, ,
\end{align*}
hence $M-1 > 0$ and $\reg(V) \leq M-1$.
\end{birki}
Conversely, let $V$ be an $\FI$-module which is presented in finite degrees, $\weak(V) \leq r$, and $\reg(V) \leq M-1$ (in particular $h^{0}(V) \leq M-1$ by Theorem \ref{nss}). We observe the following:
\begin{itemize}
 \item $\deriv^{r+1}V = \deriv^{r} \deriv V$ is torsion, so $\weak(\deriv V) \leq r-1$.
 \item By Proposition \ref{fp-deriv}, $\deriv V$ is presented in finite degrees.
 \item Either $V$ is $\cofi{0}$-acyclic and hence so is $\deriv V$ (see the discussion in \cite[Section 2.3]{cmnr-range}) and $\reg(\deriv V) = -2$, or $V$ is not $\cofi{0}$-acyclic so that
\begin{align*}
 0 \leq \trel(V) - 1 \leq \reg(V) \leq M - 1
\end{align*}
by \cite[Corollary 2.9]{bahran-reg}, and $\reg(\deriv V) \leq M-2$ by part (1) of Proposition \ref{to-the-deriv}. In both cases we have $\reg(\deriv V) \leq \max\{-1,M-2\}$.
\end{itemize}
Therefore the induction hypothesis yields 
$\deriv V \in \poly_{2}(r-1, \max\left\{0,M-1\right\})$. We also have $h^{0}(V) \leq M-1$, so $V \in \poly_{2}(r,M)$ by Definition \ref{defn:poly2}.
\end{proof}

\begin{proof}[Proof of \emph{\textbf{Theorem \ref{thm:reg}}}] Immediate from Theorem \ref{thm:reg-fp} and Theorem \ref{finite-manifest}.
\end{proof}

\subsection{Twisted homological stability}
In this section we prove Theorem \ref{twisted-ranges}.

\begin{proof}[Proof of \textbf{\emph{Theorem \ref{twisted-ranges}}}]
By Theorem \ref{thm:hmax-fp}, $V$ is presented in finite degrees, $\weak(V) \leq r$, and $\local(V) \leq L-1$. Hence by \cite[Theorem 2.6]{bahran-reg}, the triple $(V,\,L-1,\,r)$ satisfies \cite[Hypothesis 1.2]{bahran-reg}. Noting that 
\begin{align*}
 r > \ceil*{\frac{L-1}{2}} \quad \text{if and only if} \quad L < 2r  \, ,
\end{align*}
 by \cite[Theorem C]{bahran-reg} we have 
\begin{align*}
 \reg(V) \leq 
\begin{cases}
 -2 & \text{if $L = 0$,}
 \\
 L & \text{if $L \geq \max\{1,2r\}$,}
 \\
 r + \floor*{\frac{L+1}{2}} & \text{if $1 \leq L < 2r$}.
\end{cases}
\end{align*}
Thus by Theorem \ref{thm:reg}, we have
\begin{align*}
 V \in 
\begin{cases}
 \poly_{2}(r,\,0) & \text{if $L = 0$,}
 \\
 \poly_{2}(r,\,L+1) & \text{if $L \geq \max\{1,2r\}$,}
 \\
 \poly_{2}\left(
 r,\,r + \floor*{\frac{L+1}{2}} + 1
 \right) & \text{if $1 \leq L < 2r$}.
\end{cases}
\end{align*}
Consequently by \cite[Theorem A]{putman-poly}, for every $k \geq 0$ the map
\begin{align*}
 \co_{k}(\sym{n};V_{n}) \rarr \co_{k}(\sym{n+1};V_{n+1})
\end{align*}
is an isomorphism for 
\begin{align*}
 n \geq 
\begin{cases}
 2k+r+1 & \text{if $L = 0$,}
 \\
 2k + L + 2 & \text{if $L \geq \max\{1,2r\}$,}
 \\
 2k + r + \floor*{\frac{L+1}{2}} + 2 & \text{if $1 \leq L < 2r$}.
\end{cases}
\end{align*}
and a surjection for  
\begin{align*}
 n \geq 
\begin{cases}
 2k+r & \text{if $L = 0$,}
 \\
 2k + L + 1 & \text{if $L \geq \max\{1,2r\}$,}
 \\
 2k + r + \floor*{\frac{L+1}{2}} + 1 & \text{if $1 \leq L < 2r$}.
\end{cases}
\end{align*}
It remains to improve the bounds in the case $L \geq \max\{1,2r-1\}$ to 
\begin{itemize}
 \item $n \geq \max\{2k+2r+1,L\}$ for the isomorphism range,
 \item $n \geq \max\{2k+2r,L\}$ for the surjection range.
\end{itemize}
To that end, we induct on $r$. For the base case $r=0$, by \cite[Theorem 2.11]{bahran-reg} there is an $\cofi{0}$-acyclic $I$ with $\weak(I) \leq 0$ and a map $V \rarr I$ which is an isomorphism in degrees $\geq L$. As $\deriv I$ is torsion but also is $\cofi{0}$-acyclic, we have $\deriv I = \kert I = 0$, in other words $I \rarr \shift{I}{}$ is an isomorphism. Thus $I_{n}$ is the same trivial $\sym{n}$-representation for every $n \geq 0$ (namely the abelian group $I_{0}$ with the trivial $\sym{n}$-action). Now by \cite[Corollary 6.7]{nakaoka-Sn}, for every $k \geq 0$ the map
\begin{align*}
 \co_{k}(\sym{n};I_{0}) \rarr \co_{k}(\sym{n+1};I_{0})
\end{align*}
is an isomorphism for $n \geq 2k$. Thus for every $k \geq 0$, the map
\begin{align*}
 \co_{k}(\sym{n};V_{n}) \rarr \co_{k}(\sym{n+1};V_{n+1})
\end{align*}
is an isomorphism for $n \geq\max\{2k,\,L\}$ (which is better than what the base case demands, namely an isomorphism for $n \geq\max\{2k+1,\,L\}$ and a surjection for $n \geq\max\{2k,\,L\}$). Next, take $r \geq 1$ and assume that every $\FI$-module $U \in \poly_{1}(r,L-1)$, that is, by Theorem \ref{thm:hmax-fp}, every $U$ presented in finite degrees with $\weak(U) \leq r-1$ and $\local(U) \leq L - 1$ satisfies\footnote{Here the inequality $L \geq \max\{1,2(r-1)-1\}$ is guaranteed as we are assuming $L \geq \max\{1,2r-1\}$.} the following: for every $k \geq 0$ the map
\begin{align*}
 \co_{k}(\sym{n};U_{n}) \rarr \co_{k}(\sym{n+1};U_{n+1})
\end{align*}
is an isomorphism for 
\begin{align*}
 n \geq\max\{2k+2r-1,\,L\} \, ,
\end{align*}
and a surjection for 
\begin{align*}
 n \geq \max\{2k+2r-2,\,L\} \, .
\end{align*}
In particular by \cite[Proposition 2.9, part (7)]{cmnr-range}, this applies to 
\begin{align*}
 U := \coker (V \rarr \shift{V}{L}) \, .
\end{align*}
In degrees $n \geq L$, writing $I := \shift{V}{L}$, we have a short exact sequence 
\begin{align*}
 0 \rarr V_{n} \rarr I_{n} \rarr U_{n} \rarr 0
\end{align*}
of $\sym{n}$-modules, and the associated long exact sequence in $\co_{*}(\sym{n};-)$ maps to that of $\co_{*}(\sym{n+1};-)$. More precisely, suppressing the symmetric groups in the homology notation, there is a commutative diagram 
\begin{align*}
 \xymatrix{
 \xymatrixcolsep{0.5cm}
 \co_{k+1}(I_{n}) \ar[r] \ar[d]_{\mu_{k+1}} &
 \co_{k+1}(U_{n}) \ar[r] \ar[d]_{\nu_{k+1}} &
 \co_{k}(V_{n}) \ar[r] \ar[d]_{\lambda_{k}} & 
 \co_{k}(I_{n}) \ar[r] \ar[d]_{\mu_{k}} & 
 \co_{k}(U_{n}) \ar[d]_{\nu_{k}}
 \\
 \co_{k+1}(I_{n+1}) \ar[r] & 
 \co_{k+1}(U_{n+1}) \ar[r] &
 \co_{k}(V_{n+1}) \ar[r] & 
 \co_{k}(I_{n+1}) \ar[r] & 
 \co_{k}(U_{n+1})
 }
\end{align*}
of abelian groups with exact rows. We observe the following:
\begin{itemize}
 \item As $I$ is $\cofi{0}$-acyclic and $\weak(I) \leq r$, then $I \in \poly_{1}(r,0)$ and so for every $k \geq 0$ the map $\mu_{k}$ is an isomorphism for $n \geq 2k+r+1$ and a surjection for $n \geq 2k+r$.
 \item By the induction hypothesis on $U$, for every $k \geq 0$ the map $\nu_{k}$ is an isomorphism for $n \geq \max\{2k+2r-1,\,L\}$ and a surjection for $n \geq \max\{2k+2r-2,\,L\}$.
\end{itemize}
Therefore we have the following:
\begin{itemize}
 \item By the five-lemma, $\lambda_{k}$ is an isomorphism provided that $\nu_{k+1}$ and $\mu_{k}$ are isomorphisms, $\mu_{k+1}$ is surjective, and $\nu_{k}$ is injective: these are guaranteed in the range $n \geq \max\{2k+2r+1,\,L\}$ (noting $2(k+1) + r \leq 2(k+1) + 2r - 1$ because $r \geq 1$).
 
 \item By one of the four-lemmas, $\lambda_{k}$ is surjective provided that $\nu_{k+1}$ and $\mu_{k}$ are surjective, and $\nu_{k}$ is injective: these are guaranteed in the range $n \geq \max\{2k+2r,\,L\}$.
\end{itemize}
\end{proof}

\section{Application to congruence subgroups}
In this section we prove Theorem \ref{congruence-larger-action}.

\begin{proof}[Proof of \textbf{\emph{Theorem \ref{congruence-larger-action}}}]
By \cite[Theorem 4.15]{bahran-reg}, we have
\begin{itemize}
 \item $\weak(\co_{k}(\GL_{\bul}(R,I);\ab)) \leq 2k$, and
 \vspace{0.1cm}
 \item $\reg(\co_{k}(\GL_{\bul}(R,I);\ab)) \leq 
\begin{cases}
 2s+3 & \text{if $k=1$,}
 \\
 4k+2s & \text{if $k \geq 2$.}
\end{cases}
$
\end{itemize}
We now consider the groupoid $\gaga := \SL^{\fU}(R/I)$ in order to follow the argument and notation in \cite[proof of Theorem 1.4]{mpp-secondary}, with the following adjustment: Declare a new $\FI$-module $V$ via 
\begin{align*}
 V_{S} := 
\begin{cases}
\co_{k}(\GL_{S}(R,I);\ab) & \text{if $|S| \geq n_{0}$,}
 \\
 0 & \text{if $|S| < n_{0}$,}
\end{cases}
\end{align*}
so that by part (1) of Remark \ref{surj-rem}, $V$ extends to a $U\gaga$-module. Note that as an $\FI$-module by construction there is a short exact sequence 
\begin{align*}
 0 \rarr V \rarr \co_{k}(\GL_{S}(R,I);\ab) \rarr T \rarr 0
\end{align*}
with $\deg(T) \leq n_{0}-1 \leq 2s+2$. Invoking Corollary \ref{finite-support}, applying $\locoh{0}$ the associated long exact sequence here yields 
\begin{align*}
 h^{0}(V)
 &\leq h^{0}(\co_{k}(\GL_{\bul}(R,I);\ab)) \, ,
 \\
 h^{1}(V) &\leq \max\{\deg T,\, 
 h^{1}(\co_{k}(\GL_{\bul}(R,I);\ab))\} \, ,
 \\
 h^{j}(V) &= h^{j}(\co_{k}(\GL_{\bul}(R,I);\ab)) \quad \text{if $j \geq 2$.}
\end{align*}
Thus if $h^{j}(V) \geq 0$ for $j \neq 1$, we have $h^{j}(\co_{k}(\GL_{\bul}(R,I);\ab)) \geq 0$ and hence 
\begin{align*}
 h^{j}(V) + j \leq h^{j}(\co_{k}(\GL_{\bul}(R,I);\ab)) + j 
 \leq 
\begin{cases}
 2s+3 & \text{if $k=1$,}
 \\
 4k+2s & \text{if $k \geq 2$.}
\end{cases}
\end{align*}
by Theorem \ref{nss} applied to $\co_{k}(\GL_{\bul}(R,I);\ab)$. If $h^{1}(V) \geq 0$, there are two possibilities: 
\begin{itemize}
 \item $h^{1}(\co_{k}(\GL_{\bul}(R,I);\ab) \leq \deg T$. Then $h^{1}(V) + 1 \leq \deg T + 1 \leq 2s+3$.
 \item $h^{1}(\co_{k}(\GL_{\bul}(R,I);\ab) > \deg T$. Then $h^{1}(\co_{k}(\GL_{\bul}(R,I);\ab)) \geq 0$ and hence 
\begin{align*}
 h^{1}(V) + 1 \leq h^{1}(\co_{k}(\GL_{\bul}(R,I);\ab)) + 1 
 \leq
 \begin{cases}
 2s+3 & \text{if $k=1$,}
 \\
 4k+2s & \text{if $k \geq 2$.}
\end{cases}
\end{align*}
by Theorem \ref{nss}.
\end{itemize}
Applying Theorem \ref{nss} to $V$ now, we get
\begin{align*}
 \reg(V) \leq
\begin{cases}
 2s+3 & \text{if $k=1$,}
 \\
 4k+2s & \text{if $k \geq 2$.}
\end{cases}
\end{align*}
By \cite[Proposition 3.3]{cmnr-range} we also have $\weak(V) \leq 2k$.
Thus by Theorem \ref{thm:reg} and Remark \ref{compare-2}, $V$ has polynomial degree $\leq 2k$ 
\begin{align*}
 \text{in ranks }
 > \begin{cases}
 2s+3 & \text{if $k=1$,}
 \\
 4k+2s & \text{if $k \geq 2$,}
\end{cases}
\end{align*}
in the sense of \cite[Definition 2.40]{mpp-secondary}. \begin{birki}
 \item By \cite[Remark 2.42]{mpp-secondary}, $V$ has the same polynomial degree and rank bounds as a $U\gaga$-module.
 \item Noting that $\sr(R/I) \leq s$ as well \cite[Lemma 4.1]{bass-k-theory}, by \cite[Proposition 2.13]{mpp-secondary}, the category $U\gaga$ satisfies $\mathbf{H3}(2,s+1)$.
\end{birki}
Therefore by \cite[Theorem 3.11]{mpp-secondary}, we have 
\begin{align*}
 \wt{\co}_{i}^{\mathcal{G}}(V)_{n}
 = 0 \text{ for } n &> 
\begin{cases}
 \max\{2s+i+4,\, s+2i+3\} & \text{if $k = 1$,}
 \\
 \max\{4k+2s+i+1,\, 2k+s+2i+1\} & \text{if $k \geq 2$.}
\end{cases}
\end{align*}
and in particular
\begin{align*}
 \wt{\co}_{-1}^{\mathcal{G}}(V)_{n}
 = 0 \text{ for } n &> 
\begin{cases}
 2s + 3 & \text{if $k =1$,}
 \\
 4k + 2s & \text{if $k \geq 2$,}
\end{cases}
  \\
 \wt{\co}_{0}^{\mathcal{G}}(V)_{n}
  = 0 \text{ for } n &> 
\begin{cases}
 2s + 4 & \text{if $k =1$,}
 \\
 4k + 2s + 1 & \text{if $k \geq 2$.}
\end{cases}
\end{align*}
Noting that the definitions of $\wt{\co}^{\mathcal{G}}_{*}$ in \cite[Definition 3.14]{mpw-torelli-H2} and \cite[Definition 2.9]{mpp-secondary} are consistent with each other, the vanishing above corresponds to a coequalizer diagram of the form 
\begin{align*}
 \Ind_{\gaga_{n-2}}^{\gaga_{n}} V_{n-2} \rightrightarrows   
 \Ind_{\gaga_{n-1}}^{\gaga_{n}} V_{n-1}
 \rarr
 V_{n}
\end{align*}
of $\zz\gaga_{n}$-modules whenever 
\begin{align*}
 n \geq 
\begin{cases}
 2s + 5 & \text{if $k =1$,}
 \\
 4k + 2s + 2 & \text{if $k \geq 2$.}
\end{cases}
\end{align*}
by \cite[Remark 3.16]{mpw-torelli-H2}. In this range, we have $n-2 \geq 2s+3 \geq n_{0}$, so that 
\begin{align*}
 V_{j} = \co_{k}(\GL_{j}(R,I);\ab) \quad \text{for $j \in \{n-2,\, n-1,\, n\}$} \, .
\end{align*}
\end{proof}

\bibliographystyle{hamsalpha}
\bibliography{stable-boy}

\end{document}